\theoremstyle{plain}
\newtheorem{thm}{Theorem}[section]
\newtheorem{prop}[thm]{Proposition}
\newtheorem{lemma}[thm]{Lemma}
\theoremstyle{remark}
\theoremstyle{definition}
\newtheorem{rem}[thm]{Remark}
\newtheorem{rems}[thm]{Remarks}
\newtheorem{remdef}[thm]{Remark-Definition}
\newtheorem{notas}[thm]{Notations}
\title[Spaces of symmetric real matrices]{Differential properties\\
of spaces of symmetric real matrices}
\author[A. Dolcetti]{Alberto Dolcetti}
\author[D. Pertici]{Donato Pertici}
{\small \address{Dipartimento di Matematica  e Informatica ``Ulisse Dini''\\Viale Morgagni 67/a\\50134 Firenze, ITALIA}
\email{alberto.dolcetti@unifi.it, \  http://orcid.org/0000-0001-9791-8122} 

\email{donato.pertici@unifi.it,  \   http://orcid.org/0000-0003-4667-9568}}
\begin{document}
\parindent 0pt

\selectlanguage{english}

\maketitle

\vspace*{-0.2in}

\begin{abstract}
We study the differential geometric properties of the manifold of non-singular symmetric real matrices endowed with the trace metric; in case of positive definite matrices we describe the full group of isometries.
\end{abstract}

\maketitle
\vspace*{-0.2in}

{\small \tableofcontents}

\renewcommand{\thefootnote}{\fnsymbol{footnote}}
\footnotetext{
This research was partially supported by GNSAGA-INdAM (Italy).
}
\renewcommand{\thefootnote}{\arabic{footnote}}
\setcounter{footnote}{0}

\vspace*{-0.3in}

{\small {\scshape{Keywords.}} Symmetric matrices, positive definite matrices, trace metric, symmetric (Semi-)Riemannian spaces, isometries, representations of Lie groups, inner and outer automorphisms.

\smallskip

{\small {\scshape{Mathematics~Subject~Classification~(2010):} 15B48, 53C35, 53C50, 17B40, 17B10.}

\section*{Introduction.}

In this paper we carry on with the study of the so-called trace metric $g$ on the submanifolds of $GL_n$ of non-singular real matrices of order $n \ge 2$, begun in \cite{DoPe2015} and in \cite{DoPe2016}.
Precisely we study the Semi-Riemannian manifolds $(GLSym_n(p), g)$ of the non-singular symmetric real matrices of signature $(p, n-p)$, together with their Semi-Riemannian submanifolds $(SLSym_n(p), g)$ of matrices with determinant $(-1)^{n-p}$ (Section \S\,2). In particular we find geodesics, Riemann tensor, Ricci and scalar curvature and we prove that $(GLSym_n(p), g)$ is isometric to the Semi-Riemannian product $(SLSym_n(p) \times \mathbb{R}, g \times h)$, where $h$ is the euclidean metric.

The geometry of positive definite matrices is object of interest in different frameworks (see for instance \cite{Lan1999} Ch.XII,  \cite{BhaH2006} \S2, \cite{Bha2007} Ch.6, 

\cite{MoZ2011} \S3), therefore we specify the previous results to the corresponding Riemannian manifold $(\mathcal{P}_n, g)$ and to the Riemmanian submanifold $(SL\mathcal{P}_n, g)$ of positive definite matrices with determinant $1$ (Section \S\,3). 

As an application, we determine and describe geometrically the full group of isometries of $(\mathcal{P}_n, g)$  and we interpret  many isometries as suitable symmetries inside $(\mathcal{P}_n, g)$ (Section \S\,4). At first we determine the isometries of $(SL\mathcal{P}_n, g)$ by using many arguments from the theory of symmetric Riemannian spaces and from the theory of Lie group representations; then the isometries of $(\mathcal{P}_n, g)$ are deduced from the de Rham decomposition of $(\mathcal{P}_n , g)$ as $(SL\mathcal{P}_n \times \mathbb{R}, g \times h)$.

After finishing this work, we have found a paper of Lajos Moln\'ar, where he describes the isometries of the manifold of positive definite hemitian matrices $\mathit{H}_n$ endowed with a class of metrics, which includes $g$ (see \cite{Mol2015} Thm.\,3). 
Comparing this with our results, it follows that every isometry of $(\mathcal{P}_n, g)$ is the restriction of an isometry of $(\mathit{H}_n, g)$. Anyway our methods are different and should clarify the geometric descriptions and make explicit the links with the theory of symmetric Riemannian spaces and with the theory of Lie group representations.

\smallskip

\textbf{Acknowledgement.} We thank Fabio Podest\`a for many discussions about the matter of this paper and especially for his fundamental help in suggesting and clarifying to us the tools of the theory of Lie group representations, used in this paper.

\section{Preliminary facts and recalls}

\begin{notas}\label{notazioni}
In this paper, for every integer $n\ge 2$,  we denote 

- $M_n= M_n(\mathbb{R})$: the vector space of real square matrices of order $n$;

- $Sym_n=Sym_n(\mathbb{R})$ (resp. $Sym_n^0$): the vector subspace of $M_n$ of symmetric matrices (resp. with \emph{trace} equal to $0$);

- $GL_n = GL_n(\mathbb{R})$ (respectively $GL_n^+ = GL_n^+(\mathbb{R})$ and $SL_n= SL_n(\mathbb{R})$): the multiplicative group of non-degenerate matrices in $M_n$ (respectively with positive determinant and with determinant equal to $1$);

- $GLSym_n = GL_n \cap Sym_n = \{ A \in GL_n \ / \ A = A^T \}$ ($A^T$ is the \emph{transpose} of $A$);

- $GLSym_n(p)$ ($0 \le p \le n)$: the set of matrices of $GLSym_n$ with \emph{signature} $(p, n-p)$; in particular
$GLSym_n(n)$ and $GLSym_n(0)$ are respectively the sets of positive definite and negative definite matrices; we denote $GLSym_n(n)$ also by $\mathcal{P}_n$;

- $SLSym_n(p)$ ($0 \le p \le n)$: the set of matrices of $GLSym_n(p)$ with determinant $(-1)^{n-p}$; we denote $SLSym_n(n)$ (the set of positive definite symmetric matrices with determinant $1$) also by $SL\mathcal{P}_n$;

- $J_p$ ($0 \le p \le n$): the \emph{block diagonal matrix} 
$diag(I_p, -I_{n-p})$, where $I_h$ is the identity matrix of order $h$, with the agreement that $J_n = I_n$ and $J_0 = - I_n$;

- $O_n$ (resp. $SO_n$): the group of (resp. special) orthogonal matrices;

- $O_n(p)$ (resp. $SO_n(p)$), $0 \le p \le n$: the subgroup of matrices $A \in GL_n$ (resp. $A \in GL_n^+$) such that $A\,J_p A^T = J_p$ (in particular, if $p=n$, then $O_n(n)= O_n$ and $SO_n(n)=SO_n$);

- for every $n \ge 3$, $\pi_n: Spin_n \to SO_n$: the \emph{universal covering} of $SO_n$.

\smallskip

For every \emph{connected Lie group} $\mathbf{G}$, we denote by $Aut(\mathbf{G})$, by $Inn(\mathbf{G})$ and by $Out(\mathbf{G})= Aut(\mathbf{G})/ Inn(\mathbf{G})$ the groups of \emph{automorphisms}, of \emph{inner automorphisms} and of \emph{outer automorphisms} of $\mathbf{G}$. 

Since $Spin_n$ is compact, simply connected, simple Lie group, $Out(Spin_n)$ is isomorphic to the group of symmetries of the \emph{Dynkin diagram} of its \emph{Lie algebra} (see for instance \cite{W2011} Thm.\,8.11.3 and \cite{OnVi1990} pag.\,49). If $n=2m+1\ge 3$ the corresponding Dynkin diagram is $B_m$ and if $n=2m \ge 4$ it is $D_m$ (see instance \cite{W2011} Thm.\,8.9.12); as a consequence $Out(Spin_{2m+1})$ is trivial for every $m$, while $Out(Spin_{2m}) \simeq \mathbb{Z}_2$ for $m \ne 4$ and $Out(Spin_{8})\simeq \mathbf{Dih}_3$ (the \emph{dihedral group}).  

\smallskip

As usual the \emph{commutator of} $A, B \in M_n$ is $[A,B] = AB-BA$.

For every $A \in GL_n$, $A^{-T}$ denotes the matrix $(A^T)^{-1} = (A^{-1})^T$.

For every $1 \leq i,j \le n$,  $E^{(i,j)}$ denotes the matrix in $M_n$ whose $(h,k)$-entry is $1$ if $(h,k)=(i,j)$ and $0$ otherwise.

For every $A \in M_n$ we denote the \emph{exponential mapping} by $e^A= exp(A) = I_n + \sum_{i=1}^{+\infty} \dfrac{A^i}{i!}$.

We define a $C^\infty$-tensor $g$ of type $(0,2)$ on $GL_n$, by 
\begin{center}
$g_A(V,W) = tr(A^{-1}VA^{-1}W)$
\end{center} 
($tr$ indicates the \emph{trace} of a matrix). We call \emph{trace metric} the metric induced by $g$ and will denote by $g$ also its restriction to every submanifold of $GL_n$.

Finally, for every $A \in GL_n$, we denote by $\varphi$, by $\Gamma_C$, by $\varphi_C$ (with $C \in GL_n$) and by $\psi$ the following  mappings:

- $\Gamma_C(A)=CAC^T$ (\emph{congruence by} $C$); 

- $\varphi(A)=A^{-1}$ (\emph{inversion}); 

- $\varphi_C(A) = (\Gamma_C \circ \varphi)(A) = CA^{-1}C^T$;

- $\psi(A) = |det(A)|^{-2/n} A$.

In particular, for every $A \in GL_n$, we have: $det(\psi(A)) = \dfrac{1}{det(A)}$, 

$(\varphi \circ \psi)(A) = (\psi \circ \varphi)(A) = |det(A)|^{2/n} A^{-1}$; moreover $\varphi$, $\psi$, $\varphi \circ \psi$ have always period $2$ and, if $C\in GLSym_n$, then also $\varphi_C$ has period $2$ and $\varphi_C(C)=C$.
\end{notas}

\begin{rems}\label{rem-iniz} We recall some facts which are know or easy to check.

a) The sets $GLSym_n(p)$ are the $(n+1)$ open connected components of $GLSym_n$.

b) The mapping:
$(C,A) \mapsto \Gamma_C(A) = C AC^T$, gives a left action of the group  $GL_n$ on every $GLSym_n(p)$.

The group $GL_n/\{\pm I_n\}$ \emph{acts effectively} by congruence on every $GLSym_n(p)$. Indeed, arguing on the matrices $E^{(i,j)} + E^{(j,i)}$, on $Sym_n$ it is simple to check that $\Gamma_C = \Gamma_{C'}$ if and only if $C=\pm C'$ and this suffices to conclude, being $Sym_n$ the tangent space to $GLSym_n(p)$ at any point.
 
If $A \in GLSym_n(p)$, there exists a matrix $C \in GL_n$ such that 
$\Gamma_C(A) = C AC^T =  J_p$. 
This implies that, for every $p$, both $GL_n$ and $GL_n^+$ \emph{act transitively} on $GLSym_n(p)$.

Moreover $O_n(p)$ and $SO_n(p)$ are the \emph{isotropy} subgroups at $J_p$ with respect to these actions. Hence $GLSym_n(p)$ is diffeomeorphic to both \emph {homogeneous manifolds} $GL_n/O_n(p)$ and $GL_n^+/SO_n(p)$. In particular $\mathcal{P}_n \simeq GL_n/O_n \simeq GL_n^+/SO_n$ (see for instance \cite{War1983} Thm.\,3.62).

c) Since $SL_n$ acts transitively by congruence on $SLSym_n(p)$, as above we get that $SLSym_n(p)$ is a submanifold of $GLSym_n(p)$, diffeomorphic to the homogeneous manifold $SL_n/SO_n(p)$. In particular $SL\mathcal{P}_n \simeq SL_n/SO_n$.

d) An \emph{isometry} between two Semi-Riemannian manifolds is a diffeomorphism between them preserving metric tensors. 
If $(\widehat{\mathcal{M}},\hat{g})$ is any Semi-Riemannian manifold we denote by $\mathcal{I}(\widehat{\mathcal{M}},\hat{g})$ the set of isometries of $(\widehat{\mathcal{M}},\hat{g})$. 

It is known that $\mathcal{I}(\widehat{\mathcal{M}},\hat{g})$ has the structure of Lie group (see for instance \cite{O'N1983} Ch.\,9 Thm.\,32) and we will denote by $\mathcal{I}^0(\widehat{\mathcal{M}},\hat{g})$ its connected component containing the identity.

\end{rems}

We end this section by recalling the following

\begin{prop}\label{geodeticheGL}
a) $(GL_n, g)$ is a homogeneous geodesically complete Semi-Riemanian manifold of signature 
$(\dfrac{n(n+1)}{2}, \dfrac{n(n-1)}{2})$, whose geodesics are the curves: 

$t \mapsto Ke^{tC}$ for every $C \in M_n$ and $K \in GL_n$.

b) Let $K \in GL_n$ and $X, Y, Z \in M_n$.

The Riemann curvature tensor of type $(0,4)$ of $(GL_n, g)$ at $K$ is
\begin{center}
$R_{XYZW}(K) = \dfrac{1}{4} tr([K^{-1}X, K^{-1}Y] \, [K^{-1}Z, K^{-1}W]).$
\end{center}

c) $(GL_n^+, g)$ is a symmetric Semi-Riemanian manifold and among its isometries there are the congruences and the inversion $\varphi$.
\end{prop}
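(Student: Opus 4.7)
The key structural observation is that the trace metric $g$ is \emph{bi-invariant} on $GL_n$. Indeed, for $C \in GL_n$, the cyclic property of the trace gives
$$g_{CA}(CV,CW) = tr(A^{-1}C^{-1}CVA^{-1}C^{-1}CW) = g_A(V,W)$$
and analogously $g_{AC}(VC,WC) = tr(C^{-1}A^{-1}VA^{-1}WC) = g_A(V,W)$, so both left and right multiplications are isometries. Equivalently, the inner product $\langle V,W\rangle_I = tr(VW)$ on $M_n = T_I GL_n$ is $\mathrm{Ad}$-invariant. All three parts will then follow from the standard theory of Lie groups equipped with a bi-invariant pseudo-metric.

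For (a), homogeneity is immediate from left-invariance. To compute the signature at $I$, I would orthogonally decompose $M_n = Sym_n \oplus \mathrm{Antisym}_n$ (the two summands are $\langle\cdot,\cdot\rangle_I$-orthogonal since $tr(VW)=0$ when $V$ is symmetric and $W$ antisymmetric), and observe that on these summands the form reads $\pm\sum v_{ij}^2$ respectively; this yields signature $(n(n+1)/2,\,n(n-1)/2)$, which is constant by homogeneity. For the geodesic equations, I would invoke the classical fact that on a Lie group with bi-invariant metric the one-parameter subgroups $t\mapsto e^{tC}$ are geodesics; in our setting this reduces to the identity $\langle [C,X],C\rangle_I = tr((CX-XC)C) = 0$ (cyclic property). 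Applying the isometry $L_K$ sends this geodesic to $t \mapsto Ke^{tC}$; since every tangent vector at $K$ arises as $KC$ for some $C \in M_n$ and these curves are defined for all $t \in \mathbb{R}$, geodesic completeness follows.

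For (b), a bi-invariant (pseudo-)metric has Levi-Civita connection $\nabla_X Y = \tfrac{1}{2}[X,Y]$ on left-invariant fields, so the curvature of type $(1,3)$ at $I$ equals $R(X,Y)Z = \tfrac{1}{4}[[X,Y],Z]$. Lowering the last index with $g_I$ and using the easy identity $tr([[X,Y],Z]W) = tr([X,Y][Z,W])$ (another application of cyclicity) produces $R_{XYZW}(I) = \tfrac{1}{4}tr([X,Y][Z,W])$. Pulling back by the isometry $L_K^{-1}$, which maps $X \in T_K GL_n$ to $K^{-1}X \in T_I GL_n$, yields the stated formula at a general $K$. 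For (c), the inversion $\varphi$ is an isometry (a general feature of bi-invariant metrics, or by direct check), and $d\varphi_I = -\mathrm{id}_{M_n}$, so $\varphi$ is a geodesic symmetry at $I$; since $\varphi$ preserves $GL_n^+$, conjugation by left translations $L_K$ with $K \in GL_n^+$, which preserve $GL_n^+$ and are isometries, produces a geodesic symmetry at every point, proving that $(GL_n^+,g)$ is symmetric. The congruences $\Gamma_C = L_C \circ R_{C^T}$ are compositions of isometries, and $\varphi$ has already been shown to be one.

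The only subtle point is sign-tracking in the bi-invariant Levi-Civita and curvature identities in the pseudo-Riemannian setting; however, since the indefiniteness enters only through the metric tensor and not through the Lie algebraic manipulations themselves, the Riemannian derivations carry over unchanged.
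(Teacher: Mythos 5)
Your proposal is correct, and it is genuinely self-contained where the paper is not: for this proposition the authors give no argument at all, simply citing Prop.\,1.1, Thm.\,2.1, Prop.\,3.1 and Prop.\,1.2 of their earlier paper \cite{DoPe2015}. Your route --- establishing bi-invariance of $g$ from the cyclicity of the trace and then importing the standard package for Lie groups with bi-invariant pseudo-metrics ($\nabla_XY=\tfrac12[X,Y]$ on left-invariant fields, one-parameter subgroups as geodesics, curvature expressed through iterated brackets, inversion as the geodesic symmetry at the identity) --- is the natural structural argument, and every step checks out: the orthogonal splitting $M_n=Sym_n\oplus \mathrm{Antisym}_n$ gives the signature, the identity $tr([C,X]\,C)=0$ gives the geodesics, $tr([[X,Y],Z]W)=tr([X,Y][Z,W])$ gives the $(0,4)$ curvature, and $L_K\circ\varphi\circ L_{K^{-1}}:A\mapsto KA^{-1}K$ is the symmetry at $K\in GL_n^+$. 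What this buys over the paper's citation is transparency: the reader sees at once why congruences ($=L_C\circ R_{C^T}$) and inversion are isometries and why the curvature has the bracket form. The one point you rightly flag but should nail down is the curvature sign: with the convention $R(X,Y)Z=\nabla_X\nabla_YZ-\nabla_Y\nabla_XZ-\nabla_{[X,Y]}Z$ the Jacobi identity yields $-\tfrac14[[X,Y],Z]$, whereas your stated $+\tfrac14[[X,Y],Z]$ corresponds to the opposite (O'Neill) convention, which is the one consistent with the paper's formula and with its later use of $R_{XYXY}=\tfrac14 tr([X,Y]^2)\le 0$ on $\mathcal{P}_n$; a sentence fixing the convention would make part (b) airtight. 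Your remark that the pseudo-Riemannian setting changes nothing in the Lie-algebraic manipulations is accurate, since only $\mathrm{ad}$-invariance of the (possibly indefinite, but nondegenerate) form is used.
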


\begin{proof}
See \cite{DoPe2015} Prop. 1.1, Thm. 2.1, Prop. 3.1 and Prop. 1.2.
\end{proof}

\section{The Semi-Riemannian manifolds $(GLSym_n(p), g)$}

\begin{prop}\label{base-orton}
a) $(GLSym_n(p), g)$ is a homogeneous Semi-Riemannian submanifold 

of $(GL_n, g)$ with signature $(\dfrac{n(n+1)}{2} -p(n-p), p(n-p))$ for every $p=0, \cdots , n$. 

An orthonormal basis with respect to $g_{J_p}$ of the tangent space $T_{J_p}(GLSym_n(p)) = Sym_n$ is 
\begin{center}
$\mathcal{B} = \{ E^{(i,i)} : i= 1, \cdots , n \} \cup \{ S^{(i,j)} = \dfrac{E^{(i,j)}+{E^{(j,i)}}}{\sqrt{2}} : 1 \le i < j \le n \}$. 
\end{center} 

The time-like vectors are the vectors $S^{(i,j)}$ with $1\le i \le p, p+1 \le j \le n$, the remaining vectors are space-like.

\smallskip

b) For every $p= 0, \cdots , n$, the mapping $A \mapsto -A$ 
is an isometry between the Semi-Riemannian manifolds $(GLSym_n(p), g)$ and $(GLSym_n(n-p), g)$; in particular $(\mathcal{P}_n, g)$ and $(GLSym_n(0), g)$ are isometric Riemannian manifolds.
\end{prop}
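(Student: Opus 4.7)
The plan for part (a) is to exploit homogeneity and reduce everything to a single-point computation at $J_p$. The first step is to verify that congruence $\Gamma_C(A) = CAC^T$ by an arbitrary $C \in GL_n$ is an isometry of the ambient manifold $(GL_n, g)$: the differential of $\Gamma_C$ at $A$ sends $V$ to $CVC^T$, and substituting $(CAC^T)^{-1} = C^{-T}A^{-1}C^{-1}$ into $g_{CAC^T}(CVC^T, CWC^T)$ and invoking cyclicity of the trace produces exactly $g_A(V,W)$; this slightly extends the $GL_n^+$ version in Proposition \ref{geodeticheGL}(c). Combined with the transitive action of $GL_n$ on $GLSym_n(p)$ by congruences (Remark \ref{rem-iniz}(b)), this yields homogeneity of the submanifold and shows that the induced bilinear form has the same signature at every point.

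It then suffices to work at $K = J_p$, where $J_p^{-1} = J_p$ gives $g_{J_p}(V,W) = tr(J_p V J_p W)$. Setting $\varepsilon_i = +1$ for $i \le p$ and $\varepsilon_i = -1$ for $i > p$, a direct entrywise calculation yields $J_p E^{(i,j)} J_p = \varepsilon_i \varepsilon_j E^{(i,j)}$, from which
\[
g_{J_p}(E^{(i,i)}, E^{(i,i)}) = 1, \qquad g_{J_p}(S^{(i,j)}, S^{(i,j)}) = \varepsilon_i \varepsilon_j,
\]
while orthogonality of distinct basis elements reduces to the tracelessness of the relevant matrix products. Hence $\mathcal{B}$ is orthonormal for $g_{J_p}$, and $S^{(i,j)}$ is time-like precisely when $\varepsilon_i \varepsilon_j = -1$, which under $i<j$ forces $1 \le i \le p$ and $p+1 \le j \le n$; counting the $p(n-p)$ such pairs among the $\frac{n(n+1)}{2}$ vectors of $\mathcal{B}$ gives the announced signature.

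For part (b), I would observe that if $A$ has signature $(p, n-p)$ then $-A$ has signature $(n-p, p)$, so $\sigma: A \mapsto -A$ is a diffeomorphism $GLSym_n(p) \to GLSym_n(n-p)$ with differential $-\mathrm{id}$ on $Sym_n$. The four sign cancellations in $g_{-A}(-V,-W) = tr((-A)^{-1}(-V)(-A)^{-1}(-W))$ collapse this to $g_A(V,W)$, so $\sigma$ is an isometry; specializing to $p=n$ yields the stated isometry between $(\mathcal{P}_n, g)$ and $(GLSym_n(0), g)$.

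The argument is essentially mechanical and no step is a genuine obstacle. The only items that require care are the signature bookkeeping — translating the sign pattern $\varepsilon_i \varepsilon_j$ correctly into the count $p(n-p)$ of time-like basis vectors — and recording explicitly that congruence by \emph{any} element of $GL_n$, not only by elements of $GL_n^+$ as in Proposition \ref{geodeticheGL}(c), preserves the trace metric; without this extension the transitivity of Remark \ref{rem-iniz}(b) would not directly give homogeneity of the submanifold.
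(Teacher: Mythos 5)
Your proposal is correct and follows essentially the same route as the paper: establish homogeneity via congruences (so that the signature computation reduces to the single point $J_p$), then evaluate $g_{J_p}(X,Y)=tr(J_pXJ_pY)$ on the basis $\mathcal{B}$ and count the $p(n-p)$ time-like vectors, with part (b) being the same direct sign-cancellation check. Your explicit remark that congruence by an arbitrary $C\in GL_n$ (not just $C\in GL_n^+$) preserves $g$ is a small but welcome point of care that the paper leaves implicit.
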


\begin{proof} a)
Let $A \in GLSym_n(p)$ and $C \in GL_n$ such that  $C AC^T = \Gamma_C(A) = J_p$. Since the restriction $\Gamma_C|_{_{GLSym_n(p)}}$ in an isometry of ($GLSym_n(p), g)$, then ($GLSym_n(p), g)$ is homogeneous for any $p = 0, \cdots , n$. Hence to prove that $(GLSym_n(p), g)$ is a Semi-Riemannian submanifold of $(GL_n, g)$, it suffices to verify that $g_{_{J_p}}$ is non-degenerate on $T_{J_p}(GSym_n(p)) = Sym_n$ with the expected signature.

Since the set $\mathcal{B}$ is clearly a basis of the vector space $Sym_n$, it suffices to compute $g_{J_p}$ on the pairs of elements of $\mathcal{B}$.

For every $X=(x_{ij}), Y=(y_{ij}) \in Sym_n$, standard computations allow to get:
$
g_{J_p}(X,Y) = tr(J_pXJ_pY)= 
 \sum_{i = 1}^n x_{ii}y_{ii} + \sum_{1 \le i < j \le p} 2   x_{ij}y_{ij} + \sum_{p+1 \le i < j \le n} 2   x_{ij}y_{ij} - \sum_{1 \le i \le p < j \le n} 2   x_{ij}y_{ij}
$.

This formula allows to conclude part (a) by direct computations.

\smallskip

b) The assertion follows by trivial checks.
\end{proof}

\begin{prop}\label{isomGS}
Fix $p \in \{0, \cdots , n\}$. The inversion $\varphi$ and the congruences $\Gamma_C$ ($C \in GL_n)$ are isometries of $(GLSym_n(p), g)$. 

If $C \in GLSym_n(p)$, then the isometry $\varphi_C = \Gamma_C \circ \varphi$ is the symmetry of $(GLSym_n(p), g)$ fixing $C$; therefore  $(GLSym_n(p), g)$ is a symmetric Semi-Riemannian manifold.
\end{prop}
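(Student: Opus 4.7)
The plan is to verify by explicit computation that the inversion $\varphi$ and every congruence $\Gamma_C$ are isometries of $(GLSym_n(p), g)$, and then to identify $\varphi_C$ as the geodesic symmetry at the point $C$.

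First I would note that $GLSym_n(p)$ is open in the vector space $Sym_n$ (Remarks 2.1(a)), so its tangent space at every point is $Sym_n$. Both maps send $GLSym_n(p)$ to itself: the inverse of a symmetric matrix is symmetric with the same signature, and Sylvester's law of inertia ensures that $\Gamma_C(A)$ has signature $(p, n-p)$ whenever $A$ does. Differentiating the curves $A+tV$ with $V \in Sym_n$ yields $d\Gamma_C|_A(V) = CVC^T$ and $d\varphi|_A(V) = -A^{-1}VA^{-1}$. Plugging these into the formula $g_B(X,Y) = tr(B^{-1}XB^{-1}Y)$, a few cancellations together with the cyclic property of the trace reduce each expression back to $g_A(V,W)$, so both maps are isometries.

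For the symmetry statement, fix $C \in GLSym_n(p)$. Then $\varphi_C = \Gamma_C \circ \varphi$ is an isometry as a composition of isometries, is involutive, and fixes $C$ by Notations 1.1. A short calculation of $d\varphi_C|_C$, in which the symmetry $C^T = C$ is used essentially, produces $-\mathrm{id}$ on $T_C(GLSym_n(p)) = Sym_n$; by the standard characterization, an involutive isometry fixing $C$ whose differential at $C$ equals $-\mathrm{id}$ is the unique geodesic symmetry at that point. Hence $(GLSym_n(p), g)$ admits a symmetry at each of its points and is a symmetric Semi-Riemannian manifold. No real obstacle arises; the only care needed is to distinguish those steps that use the symmetry of $C$ (in the evaluations of $\varphi_C(C)$ and of $d\varphi_C|_C$) from those that hold for arbitrary $C \in GL_n$, and to invoke Sylvester's law of inertia, rather than merely the fact that $\Gamma_C$ preserves $GLSym_n$, in order to remain within the connected component $GLSym_n(p)$.
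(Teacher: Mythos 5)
Your proposal is correct and follows essentially the same route as the paper, which simply declares the first part trivial and reduces the second to checking that $\varphi_C: X \mapsto CX^{-1}C^T = CX^{-1}C$ is the symmetry at $C$. You merely supply the details the paper omits (the differentials $d\Gamma_C|_A(V)=CVC^T$ and $d\varphi|_A(V)=-A^{-1}VA^{-1}$, the trace cancellations, the signature preservation, and the verification that $d\varphi_C|_C=-\mathrm{id}$), all of which check out.
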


\begin{proof}
The first part is trivial.
For the second one it suffices to argue on the symmetry $\varphi_C: X \mapsto C X^{-1}C^T= CX^{-1}C$ with respect to every $C \in GLSym_n$.
\end{proof}

\begin{prop}\label{tenRiemGLS}
a) For any $p= 0, \cdots , n$, $(GLSym_n(p), g)$ is a totally geodesic submanifold of $(GL_n, g)$ and its geodesics are precisely the curves of type $t \mapsto Ke^{tC}$ for every $K \in GLSym_n(p)$ and every  $C = K^{-1}V$ with $V \in Sym_n$. In particular $(GLSym_n(p), g)$ is geodesically complete.

\smallskip 

b) Let $K \in GLSym_n(p)$ and $X, Y, Z \in Sym_n$.

The Riemann curvature tensor of type $(0,4)$ of $(GLSym_n(p), g)$ at $K$ is
\begin{center}
$R_{XYZW}(K) = \dfrac{1}{4} tr([K^{-1}X, K^{-1}Y] \, [K^{-1}Z, K^{-1}W])$.
\end{center}

\end{prop}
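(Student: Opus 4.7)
The plan for part (a) is to show that, given $K \in GLSym_n(p)$ and any $V \in T_K(GLSym_n(p)) = Sym_n$, the ambient geodesic of $(GL_n,g)$ starting at $K$ with initial velocity $V$ already lies inside $GLSym_n(p)$. By Proposition \ref{geodeticheGL}(a) this geodesic is $\gamma(t) = K e^{tC}$ with $C = K^{-1}V$, since $\dot\gamma(0) = KC = V$. Once I know $\gamma(t) \in GLSym_n(p)$ for all $t \in \mathbb{R}$, the curve $\gamma$ is an ambient geodesic whose image lies in the submanifold, hence it is also a geodesic of $(GLSym_n(p), g)$; letting $(K,V)$ range over all tangent data exhausts the geodesics and yields both total geodesity and geodesic completeness.

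The key step is to verify that each $\gamma(t)$ is symmetric. Since $K, V$ are symmetric, $C^T = (K^{-1}V)^T = V K^{-1}$, and from $(VK^{-1})K = V = K(K^{-1}V)$ a routine induction gives $(VK^{-1})^j K = K (K^{-1}V)^j$ for every $j \geq 0$. Summing the exponential series yields $e^{tC^T} K = K e^{tC}$, so that $\gamma(t)^T = e^{tC^T} K^T = e^{tC^T} K = K e^{tC} = \gamma(t)$, i.e.\ $\gamma(t) \in GLSym_n$ for all $t$. Since $\gamma$ is continuous and $GLSym_n(p)$ is an open connected component of $GLSym_n$ by Remark \ref{rem-iniz}(a), $\gamma$ remains in $GLSym_n(p)$, completing part (a).

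For part (b), being totally geodesic means that the second fundamental form of $GLSym_n(p)$ in $GL_n$ vanishes identically, so the Gauss equation forces the Riemann curvature tensor of the submanifold to coincide with the restriction of the ambient one to tangent vectors. The stated formula is therefore immediate from Proposition \ref{geodeticheGL}(b) applied to $X, Y, Z, W \in Sym_n = T_K(GLSym_n(p))$. The main (and rather minor) obstacle in the whole proof is the exponential manipulation in the symmetry check; everything else reduces to a direct application of previously established results.
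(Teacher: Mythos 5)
Your argument is correct and follows essentially the same route as the paper: for (a) you verify that the ambient geodesic $Ke^{tC}$ with $C=K^{-1}V$ stays symmetric (the paper uses the conjugation identity $e^{tVK^{-1}}=Ke^{tK^{-1}V}K^{-1}$ where you sum the series via $(VK^{-1})^jK=K(K^{-1}V)^j$, which is the same computation), and for (b) you invoke the Gauss equation with vanishing second fundamental form to transfer Proposition \ref{geodeticheGL}(b). Your explicit appeal to connectedness of $GLSym_n(p)$ to keep the curve in the correct signature component is a small point the paper leaves implicit, but otherwise the proofs coincide.
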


\begin{proof}
a) Remembering \ref{geodeticheGL} (a), it suffices to check that $Ke^{tC} \in GLSym_n(p)$ for every $K \in GLSym_n(p)$, for every $C = K^{-1}V$ with $V \in Sym_n$ and for every $t \in \mathbb{R}$. 

For, by standard properties of the exponential mapping: $(Ke^{tK^{-1}V})^T =  e^{tVK^{-1}} K  = K e^{tK^{-1}V}K^{-1} K = K e^{tK^{-1}V}$. So $K e^{tK^{-1}V} \in GLSym_n(p)$.

b) It follows by \ref{geodeticheGL} (b), since $(GLSym_n(p), g)$ is a totally geodesic submanifold of $(GL_n, g)$.
\end{proof}

\begin{prop}\label{RicGLSym}
For every $p \in \{ 0, \cdots , n \}$, the Ricci curvature of $(GLSym_n(p), g) $ is 
\begin{center}
$Ric_Q (X,Z) = \dfrac{1}{4}tr(Q^{-1}X)tr(Q^{-1}Z) - \dfrac{n}{4} \, g_Q(X,Z)$
\end{center}

for every $Q \in GLSym_n(p)$ and for every $X, Z \in T_Q(GLSym_n(p))= Sym_n$ 

and the scalar curvature of $(GLSym_n(p), g)$ is 
\begin{center}
$S= - \dfrac{(n-1)n(n+2)}{8}$.
\end{center}
\end{prop}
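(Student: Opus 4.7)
The plan is to verify the Ricci formula at the single point $Q = J_p$ and then invoke invariance under the transitive group of congruence isometries $\Gamma_C$ (Proposition~\ref{isomGS} together with Remarks~\ref{rem-iniz}(b)). Indeed $d\Gamma_C$ acts on tangent vectors by $X \mapsto CXC^T$, and one checks that $tr((CQC^T)^{-1}CXC^T) = tr(Q^{-1}X)$; combined with the invariance of $g$, this shows that the right-hand side $\tfrac{1}{4}tr(Q^{-1}X)tr(Q^{-1}Z) - \tfrac{n}{4}g_Q(X,Z)$ is $\Gamma_C$-invariant. Since $Ric$ is also preserved by isometries, agreement at $J_p$ forces agreement throughout $GLSym_n(p)$.

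At $Q = J_p$ (where $J_p^{-1} = J_p$) take the orthonormal basis $\mathcal{B}$ of Proposition~\ref{base-orton} and write $\epsilon_B = g_{J_p}(B,B) \in \{\pm 1\}$. By Proposition~\ref{tenRiemGLS}(b),
\begin{equation*}
Ric_{J_p}(V,W) = \sum_{B \in \mathcal{B}} \epsilon_B\, R(V,B,W,B) = \tfrac{1}{4}\sum_{B \in \mathcal{B}} \epsilon_B\, tr\bigl([J_pV, J_pB]\,[J_pW, J_pB]\bigr).
\end{equation*}
Using $tr([A,D][C,D]) = 2\,tr(ADCD) - tr(AD^2C) - tr(ACD^2)$ with $A = J_pV$, $C = J_pW$, $D = J_pB$, the two quadratic-in-$D$ sums reduce at once to $\tfrac{n+1}{2}\,g_{J_p}(V,W)$ each, via the identity $\sum_{B \in \mathcal{B}} \epsilon_B (J_pB)^2 = \tfrac{n+1}{2}\,I_n$; this identity is checked by a direct computation after separating $\mathcal{B}$ into the diagonal part $\{E^{(m,m)}\}$ and the off-diagonal part $\{S^{(r,s)}\}$.

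The main computational obstacle is the cross term $\sum_B \epsilon_B\, tr(ADCD)$. Denoting by $\sigma_m = \pm 1$ the diagonal entries of $J_p$, careful index bookkeeping (again splitting $\mathcal{B}$ and using the $(r,s)\leftrightarrow(s,r)$ symmetry of the off-diagonal contribution to replace $\sum_{r<s}$ by $\tfrac{1}{2}(\sum_{r,s} - \text{diagonal})$) yields the fourth-order identity
\begin{equation*}
\sum_{B \in \mathcal{B}} \epsilon_B\,(J_pB)_{jk}(J_pB)_{li} = \tfrac{1}{2}\bigl(\sigma_j\sigma_k\,\delta_{jl}\delta_{ki} + \delta_{ji}\delta_{kl}\bigr).
\end{equation*}
Contracting against $A_{ij}C_{kl}$ and recognizing $\sum_{i,j} \sigma_i\sigma_j V_{ij}W_{ij} = tr(J_pVJ_pW) = g_{J_p}(V,W)$, one obtains $\sum_B \epsilon_B\, tr(ADCD) = \tfrac{1}{2}\,g_{J_p}(V,W) + \tfrac{1}{2}\,tr(J_pV)\,tr(J_pW)$. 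Assembling the three pieces gives $Ric_{J_p}(V,W) = \tfrac{1}{4}\bigl(tr(J_pV)\,tr(J_pW) - n\,g_{J_p}(V,W)\bigr)$, as claimed.

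The scalar curvature then follows by a second contraction at $J_p$:
\begin{equation*}
S = \sum_{B \in \mathcal{B}} \epsilon_B\, Ric_{J_p}(B,B) = \tfrac{1}{4}\Bigl(\sum_B \epsilon_B\,(tr\,J_pB)^2 - n\sum_B 1\Bigr).
\end{equation*}
Since $tr(J_p S^{(r,s)}) = 0$, only the diagonal elements $B = E^{(m,m)}$ contribute to the first sum, giving $\sum_m \sigma_m^2 = n$; the second sum equals $\dim(GLSym_n(p)) = \tfrac{n(n+1)}{2}$, so $S = \tfrac{1}{4}\bigl(n - \tfrac{n^2(n+1)}{2}\bigr) = -\tfrac{n(n-1)(n+2)}{8}$.
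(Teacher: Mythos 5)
Your proof is correct, and at the strategic level it follows the same skeleton as the paper's: establish the formula at the base point $J_p$, transfer it to an arbitrary $Q$ via the congruence isometries $\Gamma_C$ (using the invariance of $tr(Q^{-1}X)$ and of $g$ under congruence), and then contract once more for the scalar curvature. Where you genuinely diverge is in how the contraction over the orthonormal basis $\mathcal{B}$ is organized. The paper rewrites $R_{XYZW}(J_p)$ entrywise in terms of Minkowski products of matrix columns and grinds through a long explicit double sum, and for $S$ it uses the splitting $Sym_n = \mathrm{Span}(J_p)\oplus (J_p)^\perp$ together with $tr(J_pV_i)=0$ on the complement. You instead expand $tr([A,D][C,D]) = 2\,tr(ADCD) - tr(AD^2C) - tr(ACD^2)$ and reduce everything to two completeness relations for $\mathcal{B}$, namely $\sum_B \epsilon_B (J_pB)^2 = \tfrac{n+1}{2}I_n$ and the fourth-order identity $\sum_B \epsilon_B (J_pB)_{jk}(J_pB)_{li} = \tfrac{1}{2}(\sigma_j\sigma_k\delta_{jl}\delta_{ki}+\delta_{ji}\delta_{kl})$; I checked both identities (including the coincidence of the two cases $j=k$ and $j\ne k$ in the second one) and the resulting assembly, as well as low-dimensional numerical instances in both the Riemannian ($p=n$) and indefinite cases, and everything is consistent. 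This buys a cleaner, essentially basis-free contraction once the two identities are in hand, at the cost of having to justify those identities; the paper's route is more elementary but much heavier on index bookkeeping. One small point worth flagging: your contraction $Ric_{J_p}(V,W)=\sum_B \epsilon_B R(V,B,W,B)$ (with a plus sign) is indeed the one compatible with the paper's $(0,4)$ tensor $R_{XYZW}=\tfrac14 tr([K^{-1}X,K^{-1}Y][K^{-1}Z,K^{-1}W])$ and with the non-positivity of the sectional curvature in the positive definite case; the paper writes the contraction with an overall minus sign but compensates for it inside its expanded expression for $-4R_{XYZW}$, so the two conventions agree in the end.
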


\begin{proof}
Fixed $p \in \{ 0, \cdots , n \}$, we denote $\epsilon_i = 1$, for $i = 1, \cdots,  p$, $\epsilon_i =  -1$, for $i = p+1, \cdots,  n$, and $J= J_p$. If $X, Y \in \mathbb{R}^n$ are column vectors, we denote: $<X,Y> = X^T J Y = \sum_{i=1}^n \epsilon_i x_i y_i $, where $X= (x_1, \cdots , x_n)^T$ and $Y= (y_1, \cdots , y_n)^T$ (the \emph{Minkowski product} of $\mathbb{R}^n$ of signature $(p, n-p)$).

Given every matrix $A$, we denote by $A^h$ its $h$-th column. If $A$ is symmetric, $A^h$ is the $h$-th row too. 
From \ref{tenRiemGLS} (b), by standard computation we get:

$-4 R_{XYZW} = \\
\hspace*{0.65 in}= \sum_{i,j = 1}^n \epsilon_i \epsilon_j (<X^i,Y^j><Z^j,W^i> + <Y^i,X^j><W^j,Z^i> + \\
\hspace*{1.9 in} - <Y^i,X^j><Z^j,W^i> - <X^i,Y^j><W^j,Z^i>),
$

for every $X,Y,Z,W  \in T_J(GLSym_n(p)) =Sym_n$.

The basis $\mathcal{B}$ of \ref{base-orton} can be listed also by
$\{ W_{(i,j)} = \dfrac{E^{(i,j)}+E^{(j,i)}}{\sqrt{2(1 + \delta_{ij})}} \ / \ 1 \le i \le j \le n \}$
and so $g(W_{(i,j)},W_{(i,j)}) = \epsilon_i \epsilon_j$ for every $1 \le i \le j \le n$. 

From the previous formula, for all symmetric matrices $X$, $Z$ we get:

$4Ric_J(X,Z) = -4 \sum_{1 \le k \le m \le n} g(W_{(k,m)}, W_{(k,m)}) R_{X W_{(k,m)} Z W_{(k,m)}}= \\
= 2 \sum_{1 \le k \le m \le n} \sum_{i, j = 1}^ n \epsilon_k \epsilon_m \epsilon_i \epsilon_j (<X^i, W_{(k,m)}^j><Z^j, W_{(k,m)}^i> +\\
\hspace*{2.7 in}-<X^j, W_{(k,m)}^i><Z^j, W_{(k,m)}^i>).
$

\medskip
For every symmetric matrix $A= (a_{ij})$ we have $<A^i,W_{(k,m)}^j> = \dfrac{a_{ik} \epsilon_k \delta_{jm} + a_{im} \epsilon_m \delta_{jk}}{\sqrt{2(1 + \delta_{km})}}$.

Hence, by standard computations, if $X=(x_{ij}), Z=(z_{ij}) \in Sym_n$ we get:

$
4 Ric_J(X,Z) =  \\
=\sum_{1 \le k \le m \le n} \dfrac{1}{1+\delta_{km}} [\epsilon_k \epsilon_m x_{mk} z_{mk} + \epsilon_k \epsilon_m x_{kk} z_{mm} + \epsilon_m \epsilon_k x_{mm} z_{kk} + \epsilon_m \epsilon_k x_{km} z_{km} + \\
\hspace*{0.8 in} - \sum_{j = 1}^n (\epsilon_k \epsilon_j x_{jk} z_{jk} + \epsilon_k \epsilon_j x_{jm} z_{jk} \delta_{mk} + \epsilon_k \epsilon_j x_{jk} z_{jm} \delta_{mk} + \epsilon_m \epsilon_j x_{jm} z_{jm})]
= \\
= 
2 \sum_{k=1}^n x_{kk} z_{kk} + \sum_{1 \le k < m \le n} \epsilon_k \epsilon_m (x_{kk} z_{mm} + x_{mm} z_{kk} + 2 x_{km}z_{km}) +\\ 
\hspace*{0.2 in}- 2 \sum_{k,j =1}^n \epsilon_k \epsilon_j x_{jk} z_{jk} - \sum_{1 \le k < m \le n} \epsilon_k \sum_{j=1}^n \epsilon_j x_{jk} z_{jk} -  \sum_{1 \le k < m \le n} \epsilon_m \sum_{j=1}^n \epsilon_j x_{jm}z_{jm}.
$

Note that:
$
tr(JX)tr(JZ) + g_J(X,Z) =\\
= 2 \sum_{k = 1}^n x_{kk} z_{kk} + \sum_{1 \le k < m \le n} \epsilon_k \epsilon_m (x_{kk} z_{mm} + x_{mm} z_{kk} +2 x_{km} z_{km})
$.

Moreover: $-2 g_J(X,Z) 
-2 \sum_{k,j= 1}^n \epsilon_k \epsilon_j x_{jk} z_{jk}$.

Furthermore:

$
-(n-1) g_J(X,Z) =
- \sum_{1 \le k < m \le n} \epsilon_k \sum_{j=1}^n \epsilon_j x_{jk} z_{jk} - \sum_{1 \le k < m \le n} \epsilon_m \sum_{j=1}^n \epsilon_j x_{jm} z_{jm}
$.

Comparing this with the expression of $4Ric_J(X,Z)$, we get that:

$
4Ric_J(X,Z)
 = tr(JX)tr(JZ) + g_J(X,Z) -2 g_J(X,Z)-(n-1) g_J(X,Z) = \\
 =tr(JX)tr(JZ) - ng_J(X,Z)
$.

Hence:

$
Ric_J(X,Z) = \dfrac{1}{4} tr(JX)tr(JZ) - \dfrac{n}{4} g_J(X,Z)
$,
for every $X, Z \in T_J(GLSym_n(p))= Sym_n$.

Now let $Q$ a generic matrix of $GLSym_n(p)$. We know that there exists a non-singular $C$ such that $\Gamma_{C}(Q) = CQ C^T  = J$, i.e. $JC = C^{-T} Q^{-1}$. Since $\Gamma_C$ is an isometry of $(GSym_n(p),g)$, it preserves $Ric$, i.e. for any $X, Z \in Sym_n$:

$
Ric_Q(X,Z) = 
Ric_{\Gamma_C(Q)} (\Gamma_C(X), \Gamma_C(Z)) = Ric_J(CXC^T, CZC^T)=\\
\dfrac{1}{4} tr(JCXC^T) tr(JCZC^T)
- \dfrac{n}{4} \, tr(JCXC^T JCZC^T)=
\dfrac{1}{4}tr(Q^{-1}X)tr(Q^{-1}Z) - \dfrac{n}{4} g_Q(X,Z)
$

and this concludes the first part.

The scalar curvature $S$ of $(GLSym_n(p), g)$ is constant, because this manifold is homogeneous. So it suffices to compute $S$ at the point $J$. Now $J \in T_J(GLSym_n(p))= Sym_n$ is a space-like vector (indeed $g_J(J,J)= n$), hence we have:

$
Sym_n = T_J(GLSym_n(p)) =
Span(J) \oplus (J)^\perp 
$,
where $(J)^\perp = \{ V \in Sym_n : g_J(J,V) =0 \}$.

Let $V_1, \cdots , V_d$ with $d = \dfrac{n(n+1)}{2} -1$ be an orthonormal basis of $(J)^\perp$. 
We have:

$
S= g_J(\dfrac{J}{\sqrt{n}}, \dfrac{J}{\sqrt{n}}) Ric_J(\dfrac{J}{\sqrt{n}}, \dfrac{J}{\sqrt{n}}) + \sum_{i=1}^d g_J(V_i, V_i) Ric_J(V_i,V_i)
$.

Now from the expression of $Ric$, the latter is equal to:

$
\dfrac{1}{4} [tr(\dfrac{J^2}{\sqrt{n}})]^2 - \dfrac{n}{4} tr(\dfrac{J^4}{n}) + \sum_{i=1}^d \dfrac{1}{4} g_J(V_i, V_i) [tr (J V_i) ]^2  - \dfrac{n}{4} \sum_{i=1}^d g_J(V_i, V_i) g_J(V_i, V_i) = 
- \dfrac{n}{4} d
$,
since $tr(J V_i)= g_J(J, V_i) =0$ for every $i= 0, \cdots , d$. 

Hence:
$S=  -\dfrac{n}{4} (\dfrac{n(n+1)}{2} -1) =
-\dfrac{(n-1)n(n+2)}{8}
$.
\end{proof}

\begin{prop}\label{geodSLSym}
For every $p= 0, \cdots , n$ let us consider the set $SLSym_n(p)$.

a) $(SLSym_n(p), g)$ is a homogeneous Semi-Riemannian submanifold of $(GLSym_n(p), g)$ with signature $(\dfrac{n(n+1)}{2} -p(n-p)-1, p(n-p))$.

b) $(SLSym_n(p), g)$ is a totally geodesic submanifold of $(GLSym_n(p), g)$, whose geodesics are the curves $t \mapsto K exp(tK^{-1}V)$ for every $K \in SLSym_n(p)$ and every $V \in Sym_n$ with $tr(K^{-1}V) =0$. In particular $(SLSym_n(p),g)$ is geodesically complete.

c) The inversion $\varphi$ and the congruences $\Gamma_C$ with $det(C) = \pm 1$ are isometries of the symmetric manifold $(SLSym_n(p), g)$ and, for every $Q$, $\varphi_Q$ is the symmetry fixing $Q$.

d) $(SLSym_n(p), g)$ is an Einstein manifold with Ricci tensor $Ric = - \dfrac{n}{4} g$ and with scalar curvature $S = - \, \dfrac{(n-1)n(n+2)}{8}$.
\end{prop}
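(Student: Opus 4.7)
The plan is to reduce each assertion about $(SLSym_n(p), g)$ to the corresponding statement about $(GLSym_n(p), g)$ established in Propositions \ref{base-orton}, \ref{isomGS}, \ref{tenRiemGLS}, and \ref{RicGLSym}, exploiting that $SLSym_n(p)$ is the codimension-one level set $\{\det = (-1)^{n-p}\}$ inside $GLSym_n(p)$. For (a), homogeneity has essentially been observed in Remarks \ref{rem-iniz}(c): $SL_n$ acts transitively by congruence, and for $\det C = 1$ the map $\Gamma_C$ preserves $SLSym_n(p)$ and is an isometry by Proposition \ref{isomGS}. To check that $g$ induces a non-degenerate metric of the stated signature, I work at $J_p \in SLSym_n(p)$: differentiating $\det = (-1)^{n-p}$ yields $T_{J_p}(SLSym_n(p)) = \{V \in Sym_n : tr(J_p V) = 0\}$, and a direct calculation gives $g_{J_p}(J_p, V) = tr(J_p V)$, so this tangent space is exactly the $g_{J_p}$-orthogonal complement of $J_p$. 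Since $g_{J_p}(J_p, J_p) = n > 0$, the vector $J_p$ is space-like, the restriction of $g_{J_p}$ is non-degenerate, and the signature loses exactly one space-like entry with respect to Proposition \ref{base-orton}(a).

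For (b) and (c), I just check that the ambient geodesic and isometry formulas preserve the determinant. A geodesic $t \mapsto K e^{tK^{-1}V}$ of $(GLSym_n(p), g)$ has determinant $\det(K)\, e^{t\,tr(K^{-1}V)}$, constant in $t$ exactly when $tr(K^{-1}V) = 0$: precisely the condition $V \in T_K(SLSym_n(p))$. This yields total geodesy, the explicit geodesic formula, and geodesic completeness. Similarly $\det \Gamma_C(A) = (\det C)^2 \det A$ and $\det A^{-1} = 1/\det A$, so $\Gamma_C$ with $\det C = \pm 1$ and $\varphi$ restrict to isometries of $(SLSym_n(p), g)$; for $Q \in SLSym_n(p)$ one has $\det Q = \pm 1$, so $\varphi_Q$ restricts, fixes $Q$, and remains the geodesic symmetry at $Q$ because it is already one in the ambient manifold and the submanifold is totally geodesic.

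For (d), total geodesy implies that the intrinsic Riemann tensor coincides with the ambient one of Proposition \ref{tenRiemGLS}(b) on tangent vectors. By the Gauss equation, the intrinsic Ricci at $Q$ differs from the ambient Ricci of Proposition \ref{RicGLSym} only by the sectional term from the unit normal direction $Q/\sqrt{n}$; but that term vanishes because
\[
R_{X\,Q\,Z\,Q}(Q) = \dfrac{1}{4} tr\bigl([Q^{-1}X, I_n]\,[Q^{-1}Z, I_n]\bigr) = 0,
\]
since $Q^{-1}Q = I_n$ is central. Hence the ambient formula applies unchanged, and on $X, Z \in T_Q(SLSym_n(p))$ the conditions $tr(Q^{-1}X) = tr(Q^{-1}Z) = 0$ kill the first term, leaving $Ric_Q(X, Z) = -\dfrac{n}{4} g_Q(X, Z)$. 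Tracing over an orthonormal basis of dimension $\dfrac{n(n+1)}{2} - 1 = \dfrac{(n-1)(n+2)}{2}$ then gives $S = -\dfrac{n(n-1)(n+2)}{8}$. The most delicate points I anticipate are the signature bookkeeping in (a) and the pleasant cancellation in (d) that makes the normal curvature contribution vanish; everything else is a clean restriction of the ambient computations.
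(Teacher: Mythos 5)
Your proposal is correct and follows essentially the same route as the paper: identifying $T_{J_p}(SLSym_n(p))$ as the $g$-orthogonal complement of the space-like vector $J_p$ for the signature, checking determinant-invariance of the ambient geodesics and isometries for (b) and (c), and observing that the normal curvature term $R_{XNZN}$ vanishes so that the ambient Ricci formula with $tr(Q^{-1}X)=0$ yields the Einstein condition. No substantive differences to report.
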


\begin{proof}
a)  Note that $SLSym_n(p) = GLSym_n(p) \cap SL_n((-1)^{n-p})$, where $SL_n((-1)^{n-p}) = \{ A \in GL_n : det(A) = (-1)^{n-p} \}$ is a submanifold of $GL_n$ of codimension $1$ such that, for every $Q \in SL_n((-1)^{n-p})$, by the well-known Jacob's formula, we have: 

$T_Q(SL_n((-1)^{n-p})) = \{ V\in M_n : tr(Q^{-1}V) = 0 \}$. 

Hence:
$T_Q(GL_n) = M_n$ is the sum of its vector subspaces $T_Q(GLSym_n(p))= Sym_n$ and 

$T_Q(SL_n((-1)^{n-p}))$, since 
$Q \in T_Q(GLSym_n(p))\  \setminus \ T_Q(SL_n((-1)^{n-p}))$. 

Thus $GLSym_n(p)$ and $SL_n((-1)^{n-p})$ intersect trasversally and therefore $SLSym_n(p) = GLSym_n(p) \cap SL_n((-1)^{n-p})$ is a submanifold of $GLSym_n(p)$ of dimension $\dfrac{n(n+1)}{2} -1$ (see \cite{Hir1976} Thm.\,3.3 p. 22).

Of course $T_Q(SLSym_n(p)) = \{ V \in Sym_n :  tr(Q^{-1}V) = 0\}$ for every $Q \in SLSym_n(p)$. With the same notations as in the proof of \ref{RicGLSym}, for $J=J_p$, we have: $T_{J}(SLSym_n(p)) = (J)^\perp = \{ V \in Sym_n : g_{J}(J, V)=0 \}$. 
Since $J$ is a space-like vector of $T_{J}(GLSym_n(p))$, the restriction of $g_{J}$ to $T_{J}(SLSym_n(p))$ is non-degenerate with signature

$(\dfrac{n(n+1)}{2} -p(n-p)-1, p(n-p))$.
Now if $Q \in SLSym_n(p)$, there exists a non-singular matrix $C$ such that $\Gamma_C(Q) = C Q C^T = J$. Hence $det(C) = \pm 1$ and $\Gamma_C$ is an isometry of $(GLSym_n(p), g)$, mapping $SLSym_n(p)$ onto itself and $Q$ to $J$. We conclude that $g_J$ and $g_Q$ have the same signature, for every $Q$.

\smallskip

b) By \ref{tenRiemGLS} (a), it suffices to check that $Qe^{tC} \in SLSym_n(p)$ for every $Q \in SLSym_n(p)$, for every $C = Q^{-1}V$ with $V \in T_Q(SLSym_n(p))$ and for every $t \in \mathbb{R}$. 

For, it suffices to compute $det(Qe^{tC})$ via the fact that $tr(Q^{-1}V)=0$.

\smallskip

c) The mappings in the statement are clearly isometries. If $Q \in SLSym_n(p)$, then $\varphi_Q = \Gamma_Q \circ \varphi$ is the expected symmetry of $(SLSym_n(p), g)$, fixing $Q$.

\smallskip

d)If $Q \in SLSym_n(p)$,, then $N = N_Q= \dfrac{Q}{\sqrt{n}} \in T_Q(GLSym_n(p))= Sym_n$ is a space-like unit vector and $T_Q(SLSym_n(p)) = (N)^\perp$. 
Since $T_Q(GLSym_n(p)) =  Span(N) \oplus (N)^\perp$, if $V_1 , \cdots , V_d$ (with $d= \dfrac{n(n+1)}{2} -1$) is an orthonormal basis of $T_Q(SLSym_n(p))$, then $N, V_1 , \cdots , V_d$ is an orthonormal basis of $T_Q(GLSym_n(p))$. 

Hence, if $X, Z \in T_Q(GLSym_n(p))$ and $Ric_Q$ is the Ricci tensor at $Q$ of $(GLSym_n(p), g)$, we have: 
$Ric_Q(X,Z) =
-(R_{X N Z N} + \sum_{i=1}^d g_Q(V_i, V_i) R_{X V_i Z V_i})=
- \sum_{i=1}^d g_Q(V_i, V_i) R_{X V_i Z V_i}$,

being $R_{X N Z N}=0$, by \ref{tenRiemGLS} (b).  

By part (b), the Riemann tensor of $(SLSym_n(p), g)$ is the restriction to $SLSym_n(p)$ of the Riemann tensor of $(GLSym_n(p), g)$. From the previous expression of $Ric_Q(X,Z)$, we deduce that the restriction to $T_Q (SLSym_n(p))$ of the Ricci tensor of $(GLSym_n(p), g)$ coincides with the Ricci tensor of $(SLSym_n(p), g)$ at $Q$. Hence, if $X, Z \in T_Q(SLSym_n(p))$, by part (a), the Ricci tensor of $(SLSym_n(p), g)$ at $Q$ is

$
Ric_Q(X,Z) =
\dfrac{1}{4} \{ tr(Q^{-1}X) tr(Q^{-1}Z) - n \ g_Q(X,Z) \} =
- \dfrac{n}{4} g_Q(X, Z)
$. 

Hence $(SLSym_n(p),g)$ is an Einstein manifold  with $Ric = - \dfrac{n}{4} g$ and with
 
$S = - \dfrac{n}{4} dim(SLSym_n(p)) = - \, \dfrac{(n-1)n(n+2)}{8}$.
\end{proof}

\begin{prop}\label{decopositione-p-n}
For every $p=0, \cdots n$, $(GLSym_n(p), g)$ is isometric to the Semi-Riemannian product manifold $(SLSym_n(p) \times \mathbb{R}, g \times h)$, with $h$ euclidean metric on $\mathbb{R}$.
\end{prop}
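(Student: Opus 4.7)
My plan is to exhibit an explicit diffeomorphism
$\Psi : SLSym_n(p) \times \mathbb{R} \to GLSym_n(p)$, defined by $\Psi(B, s) = e^{s/\sqrt{n}} B$, and to show it pulls back the trace metric $g$ on $GLSym_n(p)$ to the product metric $g \times h$. The somewhat unusual scaling factor $1/\sqrt{n}$ in the exponent is forced on me by the condition (derived below) that the $\mathbb{R}$-factor must contribute the euclidean metric $h$ with unit coefficient.

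First I would verify that $\Psi$ is a diffeomorphism. For every $B \in SLSym_n(p)$ and $s \in \mathbb{R}$, the matrix $e^{s/\sqrt{n}} B$ is symmetric and, being a positive scalar multiple of $B$, has the same signature $(p, n-p)$, so it lies in $GLSym_n(p)$. Conversely, given $A \in GLSym_n(p)$, the sign of $det(A)$ is $(-1)^{n-p}$, so writing $A = e^{s/\sqrt{n}} B$ with $det(B) = (-1)^{n-p}$ forces $|det(A)| = e^{s\sqrt{n}}$, which uniquely determines $s = \tfrac{1}{\sqrt{n}} \log|det(A)|$ and then $B = |det(A)|^{-1/n} A$. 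Both $\Psi$ and this inverse are manifestly smooth.

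Second I would compute the pullback metric. Fix $(B, s)$, set $A = \Psi(B, s)$, and take tangent vectors $(V, r), (V', r')$ with $V, V' \in T_B(SLSym_n(p))$ (so that $tr(B^{-1} V) = tr(B^{-1} V') = 0$, by Proposition \ref{geodSLSym}(a)) and $r, r' \in \mathbb{R}$. A direct computation yields
\begin{equation*}
d\Psi_{(B,s)}(V, r) = e^{s/\sqrt{n}} \left( V + \tfrac{r}{\sqrt{n}} B \right),
\end{equation*}
so that $A^{-1} d\Psi_{(B,s)}(V, r) = B^{-1} V + \tfrac{r}{\sqrt{n}} I_n$. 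Expanding
\begin{equation*}
g_A\bigl(d\Psi(V, r), d\Psi(V', r')\bigr) = tr\Bigl( \bigl( B^{-1} V + \tfrac{r}{\sqrt{n}} I_n \bigr)\bigl( B^{-1} V' + \tfrac{r'}{\sqrt{n}} I_n \bigr) \Bigr),
\end{equation*}
the cross terms $\tfrac{r'}{\sqrt{n}} tr(B^{-1} V)$ and $\tfrac{r}{\sqrt{n}} tr(B^{-1} V')$ vanish, and the identity $tr(I_n^2) = n$ cancels the factor $1/n$ from $\tfrac{r}{\sqrt{n}} \cdot \tfrac{r'}{\sqrt{n}}$. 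What remains is $g_B(V, V') + r r'$, which is exactly $(g \times h)_{(B,s)}\bigl((V, r), (V', r')\bigr)$.

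I expect no real obstacle: the only delicate point is choosing the correct scaling $\alpha(s) = e^{s/\sqrt{n}}$, which is dictated by the relation $n (\alpha'/\alpha)^2 = 1$ that emerges when one asks the $\mathbb{R}$-direction to contribute $ds^2$ in the pullback. Once this scaling is identified, the remaining verification reduces to a short trace manipulation based solely on the characterization of $T_B(SLSym_n(p))$ recalled above.
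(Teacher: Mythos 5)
Your proposal is correct and follows essentially the same route as the paper: the map $\Psi(B,s)=e^{s/\sqrt{n}}B$ is exactly the isometry $F$ used there, with the same inverse $P \mapsto (|det(P)|^{-1/n}P,\ \ln|det(P)|/\sqrt{n})$. The only difference is that you write out the pullback computation that the paper leaves as an "easy check," and your trace manipulation (using $tr(B^{-1}V)=0$ to kill the cross terms and $tr(I_n)=n$ to normalize the $\mathbb{R}$-factor) is accurate.
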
.

\vspace*{-0.35in}

\begin{proof}
The mapping $F: (SLSym_n(p) \times \mathbb{R}, g \times h) \to (GLSym_n(p), g)$, given by 

$F(Q,x) = e^{\frac{x}{\sqrt{n}}} Q$, is invertible with inverse $F^{-1}(P) = (\dfrac{P}{\sqrt[n]{|det(P)|}}, \dfrac{ln(|det(P)|)}{\sqrt{n}})$. We can easily check that
$F$ and $F^{-1}$ are isometries
(see also \cite{DoPe2015} Thm.\,4.2). 
\end{proof}

\section{The Riemanniann manifold $(\mathcal{P}_n, g)$}}

\begin{prop}\label{studioP_n}
a) $(\mathcal{P}_n, g)$ and $(SL\mathcal{P}_n , g)$ are complete, simply connected, homogeneous, Riemannian manifolds with non-positive sectional curvature; moreover $(SL\mathcal{P}_n , g)$ is an Einstein manifold and $(\mathcal{P}_n, g)$ is isometric to the Riemannian product manifold $SL\mathcal{P}_n \times \mathbb{R}$.

\smallskip

b) The Ricci curvature of $(\mathcal{P}_n, g)$ at any point $Q$ is negative semi-definite and $Ric_Q(X,X)=0$ if and only if $X=\lambda Q$, for some $\lambda \in \mathbb{R}$; the scalar curvature is $-\, \dfrac{(n-1)n(n+2)}{8}$.

\smallskip

c) $(\mathcal{P}_n, g)$ and $(SL\mathcal{P}_n , g)$ are symmetric Riemannian manifolds and, for every point $Q$, the corresponding symmetry $\varphi_Q$ has $Q$ as unique fixed point.

\smallskip

d) $(SL\mathcal{P}_2, g)$ is isometric to the hyperbolic plane $\mathcal{H}_2$ (endowed with Riemannian metric having curvature $-\dfrac{1}{2}$), hence $\mathcal{P}_2$ is isometric to the Riemannian product $\mathcal{H}_2 \times \mathbb{R}$.
\end{prop}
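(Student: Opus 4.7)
The plan is to specialize the Semi-Riemannian results of Section \S\,2 to $p=n$, exploiting positivity to strengthen each conclusion.

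For part (a), Proposition \ref{base-orton}(a) with $p=n$ gives signature $(\frac{n(n+1)}{2}, 0)$, so $(\mathcal{P}_n, g)$ is Riemannian; homogeneity and geodesic completeness come from Propositions \ref{base-orton}(a) and \ref{tenRiemGLS}(a), and the product decomposition $(\mathcal{P}_n, g)$ isometric to $(SL\mathcal{P}_n \times \mathbb{R}, g \times h)$ is Proposition \ref{decopositione-p-n}. Since $\mathcal{P}_n$ is an open convex cone in $Sym_n$ it is contractible, hence simply connected, and $SL\mathcal{P}_n$ inherits simple connectedness as a factor. The Einstein property of $SL\mathcal{P}_n$ is Proposition \ref{geodSLSym}(d). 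For non-positive sectional curvature I would exploit the positive definite square root $K^{1/2}$: writing $K^{-1}X = K^{-1/2}(K^{-1/2}XK^{-1/2})K^{1/2}$ shows that $K^{-1}X$ is conjugate to a symmetric matrix, so $[K^{-1}X, K^{-1}Y]$ is conjugate to a commutator of two symmetric matrices, which is antisymmetric. For such an $A$ we have $tr(A^2) = -tr(AA^T)\le 0$, and Proposition \ref{tenRiemGLS}(b) yields $R_{XYXY}(K)\le 0$.

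For part (b), I would substitute $Y = Q^{-1/2}XQ^{-1/2}\in Sym_n$ into the formula of Proposition \ref{RicGLSym}, so that $tr(Q^{-1}X) = tr(Y)$ and $g_Q(X,X) = tr((Q^{-1}X)^2) = tr(Y^2)$; Cauchy-Schwarz in the Frobenius inner product gives $(tr(Y))^2 \le n\, tr(Y^2)$, with equality iff $Y$ is a scalar multiple of $I_n$, i.e.\ $X = \lambda Q$. This yields both the sign statement and the equality case for $Ric_Q$; the scalar curvature is read off directly from Proposition \ref{RicGLSym}. For part (c), the symmetry $\varphi_Q$ is already provided by Proposition \ref{isomGS}, and uniqueness of its fixed point follows by rewriting $\varphi_Q(X) = X$ as $X^{-1} = Q^{-1}XQ^{-1}$; setting $Y = Q^{-1/2}XQ^{-1/2}\in \mathcal{P}_n$ this becomes $Y^{-1} = Y$, so $Y^2 = I_n$ and positivity of $Y$ forces $Y = I_n$, hence $X = Q$.

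For part (d), note $\dim SL\mathcal{P}_2 = 2$; by (a) this is a simply connected, complete, Einstein Riemannian $2$-manifold, which in dimension $2$ is equivalent to having constant Gaussian curvature equal to the Einstein constant $-\frac{n}{4} = -\frac{1}{2}$ (from Proposition \ref{geodSLSym}(d)). The Killing-Hopf theorem then identifies $(SL\mathcal{P}_2, g)$ with the hyperbolic plane $\mathcal{H}_2$ of curvature $-\frac{1}{2}$, and combining with the product decomposition in (a) gives $(\mathcal{P}_2, g)$ isometric to $\mathcal{H}_2 \times \mathbb{R}$. I expect the main technical point to be the non-positive curvature argument in (a); its conceptual key is the simultaneous-conjugation trick which turns $[K^{-1}X, K^{-1}Y]$ into an antisymmetric matrix up to similarity, while the arguments for (b) and (c) reduce to the same clean substitution $Y = Q^{-1/2}XQ^{-1/2}$ and everything else invokes Section \S\,2 directly.
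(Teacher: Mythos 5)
Your proof is correct and follows essentially the same route as the paper: everything except the curvature statements is read off from Section 2, nonpositivity of the sectional curvature comes from $tr(A^2)\le 0$ for the antisymmetric commutator $A$, the Ricci sign and equality case reduce to $[tr(Y)]^2\le n\,tr(Y^2)$ for symmetric $Y$, and part (d) follows from constant curvature $-\frac12$ plus the classification of complete simply connected surfaces. The only (harmless) differences are cosmetic: you conjugate by $Q^{\pm 1/2}$ at a general point where the paper reduces to $I_n$ (or to a general congruence $C$) by homogeneity, you prove the trace inequality by Cauchy--Schwarz where the paper uses eigenvalues, and in (d) you invoke the Einstein condition in dimension $2$ where the paper invokes homogeneity of the surface.
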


\begin{proof}
a) Except for the sectional curvature, the statement follows from the results proved in the previous Section.

By homogeneity, it suffices to compute the sectional curvature at point $I_n$, where, by \ref{tenRiemGLS}, we have $R_{XYXY}= \dfrac{1}{4}tr([X,Y]^2)$. Since $[X,Y]$ is skew-symmetric, $tr([X,Y]^2)$ is the opposite of the sum of the squares of the entries of the matrix $[X,Y]$ (i.e. the opposite of the \emph{Frobenius norm} of $[X,Y]$) and this allows to conclude.

\smallskip

b) The assertion about the scalar curvature is in \ref{geodSLSym} (d).

Since $Q$ is symmetric positive definite, there exists a nonsingular matrix $C$ such that $Q = C C^T = \Gamma_C(I_n)$. 
Let $X \in T_Q(\mathcal{P}_n) = Sym_n$, then by proposition \ref{RicGLSym}: 

$
4 Ric_Q(X,X) = [tr(Q^{-1}X)]^2 - n \, g_Q(X,X)
=[tr(Y)]^2 - n \ tr(Y^2) 
$,

where $Y= C^{-1} X C^{-T}$ is a symmetric matrix. The statement follows from the following:

\underline{Claim}.

For every symmetric real matrix $Y$ of order $n$, we have $[tr(Y)]^2 \le n\, tr(Y^2)$ with equality if and only if $Y = \lambda I_n$ for some $\lambda \in \mathbb{R}$.

Indeed, if $\lambda_1, \cdots , \lambda_n$ are the (possibly repeated) real eigenvalues of $Y$, then 

$[tr(Y)]^2 = \sum_{i,j=1}^n \lambda_i \lambda_j \leq \sum_{i,j=1}^n \dfrac{\lambda_i^2 + \lambda_j^2}{2} = 
n \,\sum_{i=1}^n \lambda_i^2 = n\, tr(Y^2)$ 

and the equality holds if and only if $\lambda_i = \lambda_j$ for every $i,j$, i.e. if and only if $Y = \lambda I_n$ for some $\lambda \in \mathbb{R}$, being $Y$ diagonalizable.

\smallskip

c) By \ref{isomGS} and \ref{geodSLSym}, it suffices to prove the uniqueness of the fixed point and it is enough to check this for the case $Q=I_n \in \mathcal{P}_n$, because of the homogeneity.
Hence $\varphi_{I_n} = \varphi$ and so $X \in \mathcal{P}_n$ is a fixed point of $\varphi$ if and only if $X$ is  orthogonal too, therefore $X=I_n$.

\smallskip

d) Indeed, by \ref{geodSLSym}, $(SL\mathcal{P}_2, g)$ is a complete, simply connected, homogeneous, Riemannian surface and, therefore, its curvature is constant and equal to $\dfrac{S}{2}=-\dfrac{1}{2}$ (see for instance \cite{KoNo1} VI Thm.\,7.10).
\end{proof}

\begin{rem}
$SL\mathcal{P}_n \simeq SL_n/SO_n$ is an \emph{irreducibile} symmetric space (see for instance \cite{Bes1987} Table 3 p.\,315), hence $(SL\mathcal{P}_n \times \mathbb{R}, g \times h)$ is the \emph{de Rham decomposition} of the simply connected complete Riemannian manifold $(\mathcal{P}_n, g)$ (see 
\cite{KoNo1} IV Thm.\,6.2).
\end{rem}

\begin{remdef}\label{LOG}
Let $M$ be an $n \times n$ real matrix, diagonalizable over $\mathbb{R}$ with (possibly repeated) eigenvalues $\lambda_1, \cdots , \lambda_n$, all strictly positive, and $G \in GL_n$ be a matrix such that $M = G^{-1} diag(\lambda_1, \cdots , \lambda_n ) G$. 

We denote by $LOG(M)$ the matrix $G^{-1} diag(ln(\lambda_1), \cdots , ln(\lambda_n) ) G$.

$LOG(M)$ is the unique solution of the equation $exp(X) = M$ among the $n \times n$ real matrices, which are diagonalizable over $\mathbb{R}$;  the proof is contained in \cite{Hi2008} Thm.\,1.31. $LOG(M)$ is said to be the \emph{principal logarithm of} $M$. 
Therefore, for every $r \in \mathbb{R}$, it is possible to define the $r$-\emph{th power of} $M$ as $M^r = exp(r\, LOG(M))$.
\end{remdef}

\begin{prop} Let $A, B \in \mathcal{P}_n$.

a) $\gamma(t) = A \,exp(t \,LOG(A^{-1}B)) = A(A^{-1}B)^t$is the unique geodesic arc $\gamma(t): [0,1] \to (\mathcal{P}_n, g)$ such that $\gamma(0) = A$ and $\gamma(1) = B$.

\smallskip
b) The distance $d(A, B)$ between the matrices $A, B \in \mathcal{P}_n$, induced by the trace metric, is
\begin{center}
$d(A, B)= (\sum_{i=1}^n (ln\, \mu_i)^2)^{1/2}$
\end{center}
where $\mu_1, \cdots, \mu_n$ are the (possibly repeated) eigenvalues of $A^{-1}B$.
\end{prop}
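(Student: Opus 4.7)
The plan is to build an explicit geodesic from $A$ to $B$ using Proposition~\ref{tenRiemGLS}(a), deduce uniqueness from the Cartan--Hadamard theorem via Proposition~\ref{studioP_n}(a), and then read the distance directly off the length of this unique geodesic.

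For the existence of $\gamma$, the key observation is that although $A^{-1}B$ need not be symmetric, the identity
\[ A^{-1}B \;=\; A^{-1/2}\bigl(A^{-1/2} B A^{-1/2}\bigr) A^{1/2} \]
exhibits $A^{-1}B$ as similar to the symmetric positive definite matrix $A^{-1/2}BA^{-1/2}$. Hence $A^{-1}B$ is diagonalizable over $\mathbb{R}$ with strictly positive eigenvalues $\mu_1,\dots,\mu_n$, so by Remark-Definition~\ref{LOG} the principal logarithm $LOG(A^{-1}B)$ is defined and equals $A^{-1/2}\log(A^{-1/2}BA^{-1/2})\,A^{1/2}$ (where $\log$ denotes the usual logarithm on positive definite matrices). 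Consequently $V := A\cdot LOG(A^{-1}B) = A^{1/2}\log(A^{-1/2}BA^{-1/2})\,A^{1/2}$ lies in $Sym_n$, and Proposition~\ref{tenRiemGLS}(a) guarantees that $\gamma(t) = A\exp(t\,A^{-1}V) = A\exp(t\,LOG(A^{-1}B))$ is a geodesic in $(\mathcal{P}_n,g)$; by construction $\gamma(0)=A$ and $\gamma(1)=A\cdot A^{-1}B=B$.

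Uniqueness is immediate from Proposition~\ref{studioP_n}(a): $(\mathcal{P}_n,g)$ is complete, simply connected and of non-positive sectional curvature, so the Cartan--Hadamard theorem implies that the Riemannian exponential map at $A$ is a diffeomorphism onto $\mathcal{P}_n$, and in particular any two points are joined by a unique geodesic arc.

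For part (b), the geodesic $\gamma$ realises the distance and has constant speed, so
\[ d(A,B)^2 \;=\; g_A(V,V) \;=\; tr\bigl((A^{-1}V)^2\bigr) \;=\; tr\bigl(LOG(A^{-1}B)^2\bigr) \;=\; \sum_{i=1}^n (\ln \mu_i)^2, \]
the last equality because $LOG(A^{-1}B)$ has eigenvalues $\ln\mu_1,\dots,\ln\mu_n$. The only subtle step in the whole argument is verifying that $V$ belongs to $Sym_n$ so that Proposition~\ref{tenRiemGLS}(a) applies; this is exactly what the similarity decomposition of $A^{-1}B$ in terms of $A^{\pm 1/2}$ delivers, and everything else is a routine assembly of results already established in the paper.
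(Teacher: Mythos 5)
Your proposal is correct and follows essentially the same route as the paper: the explicit geodesic template of Proposition~\ref{tenRiemGLS}(a), Cartan--Hadamard for uniqueness, and the constant-speed length computation for the distance. The only (cosmetic) difference is that you verify directly at $A$, via the factorization $A^{-1}B = A^{-1/2}(A^{-1/2}BA^{-1/2})A^{1/2}$, that $V = A\,LOG(A^{-1}B)$ is symmetric, whereas the paper first congruence-translates the pair $(A,B)$ to $(I_n, D)$ by simultaneous diagonalization and then transports the geodesic back by the isometry $\Gamma_{C^{-1}}$.
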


\begin{proof} These results are known (see \cite{BhaH2006} \S\,2 and \cite{Bha2007} Ch.\,6 \S\,1).
We shortly prove them, by using the arguments previously developed.

The classical Theorem of Cartan-Hadamard (\cite{O'N1983} Ch.\,10 Thm.\,22) implies that there is a unique geodesic arc as in (a) and by Hopf-Rinow (\cite{O'N1983} Ch.\,5 Thm.\,21) its length gives $d(A,B)$.

By simultaneous diagonalization, there is a non-singular matrix $C$ such that $\Gamma_C(A) = CA C^T= I_n$ (i.e. $A= C^{-1} C^{-T}$) and $\Gamma_C(B)= D :=diag(\mu_1, \cdots, \mu_n)$, where $\mu_1, \cdots, \mu_n$ are the (necessarily positive) eigenvalues of $A^{-1}B$, which is diagonalizable over $\mathbb{R}$. 

Now $LOG(D) = diag(ln(\mu_1), \cdots , \ln(\mu_n))$ and so by \ref{tenRiemGLS}, the unique geodesic arc joining $I_n$ and $D$ is $\beta(t)= exp(t\,LOG(D))$, $t \in [0,1]$.

Since $\Gamma_C$ is an isometry, we get:
$d(A,B) = d(I_n, D) = length(\beta) = [g_{_{I_n}} (\dot{\beta}(0), \dot{\beta}(0))]^{1/2} = [g_{_{I_n}} (LOG(D), LOG(D))]^{1/2} = (\sum_{i=1}^n (ln\, \mu_i)^2)^{1/2}$.

Now the unique geodesic arc joining $A$ and $B$ is 

$\gamma(t) = \Gamma_{C^{-1}} (\beta(t)) = C^{-1} \, exp(t \, LOG(D)) C^{-T} =
A \,exp(t\,C^T LOG(D) C^{-T})$.

Note that, by \ref{LOG}, $C^T LOG(D) C^{-T} = LOG(A^{-1}B)$. This allows to conclude.
\end{proof}

\begin{rem}
The description of the full group of isometries of $(\mathcal{P}_n, g)$ is in \S4. For now we recall that inversion and congruences are isometries of $(\mathcal{P}_n, g)$, so $GL_n$ acts transitively by congruences on the Riemannian manifold $(\mathcal{P}_n, g)$ (remember \ref{rem-iniz} and \ref{isomGS}).

Moreover it is possible to prove that for every pair $A, B \in \mathcal{P}_n$ there is a unique matrix $X \in \mathcal{P}_n$ such that $\Gamma_X(A) = B$ and that $X$ is the \emph{geometric mean} of the matrices $A^{-1}$ and $B$, i.e. the \emph{midpoint} of the unique geodesic joining $A^{-1}$ and $B$ in the manifold $(\mathcal{P}_n, g)$ (see for instance \cite{Bha2007} p.\,11, pp.\,106--107 and p.\,206). In particular the homogeneity of $(\mathcal{P}_n, g)$ can be obtained by means of congruences associated to positive definite matrices. 

\end{rem}

\begin{prop}
Fixed a matrix $U \in O_n$, we denote:
$
\hat{\mathcal{L}}_U = \{ UQ \in GL_n : Q \in \mathcal{P}_n \}
$
and
$
\hat{\mathcal{R}}_U = \{ QU \in GL_n : Q \in \mathcal{P}_n \}
$.
Then 
$\hat{\mathcal{L}}_U = \hat{\mathcal{R}}_U$ and $GL_n = \bigcup_{U \in O_n} \hat{\mathcal{L}}_U$ is a foliation of $(GL_n,g)$, whose leaves are isometric to $(\mathcal{P}_n,g)$ and totally geodesic in $(GL_n,g)$.
\end{prop}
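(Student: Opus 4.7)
The plan is to reduce everything to the polar decomposition and to the fact, already established in Proposition \ref{tenRiemGLS}, that $\mathcal{P}_n = GLSym_n(n)$ is totally geodesic in $(GL_n,g)$.

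\textbf{Step 1: Equality $\hat{\mathcal{L}}_U = \hat{\mathcal{R}}_U$.} Given $UQ \in \hat{\mathcal{L}}_U$ with $Q \in \mathcal{P}_n$ and $U \in O_n$, I would simply rewrite $UQ = (UQU^T)\,U$ and observe that $UQU^T = \Gamma_U(Q)$ is again symmetric positive definite (since $U$ is orthogonal, $\Gamma_U$ is conjugation and preserves the spectrum of $Q$). Hence $UQ \in \hat{\mathcal{R}}_U$. The reverse inclusion is symmetric: $QU = U(U^T QU)$ with $U^T Q U \in \mathcal{P}_n$.

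\textbf{Step 2: Partition and smooth foliation.} Here I would invoke the (real) polar decomposition: the smooth map
\[
\Phi : O_n \times \mathcal{P}_n \longrightarrow GL_n, \qquad \Phi(U,Q) = UQ,
\]
is a diffeomorphism, with inverse $A \mapsto (A(A^T A)^{-1/2},\, (A^T A)^{1/2})$. Thus $GL_n$ is the disjoint union of the sets $\hat{\mathcal{L}}_U = \Phi(\{U\} \times \mathcal{P}_n)$, and this decomposition corresponds, via $\Phi$, to the trivial foliation of $O_n \times \mathcal{P}_n$ by the plaques $\{U\} \times \mathcal{P}_n$. In particular it is a smooth foliation of $GL_n$ whose leaves are precisely the $\hat{\mathcal{L}}_U$.

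\textbf{Step 3: Leaves are isometric to $(\mathcal{P}_n,g)$.} For any $C \in GL_n$, left translation $L_C(A) = CA$ has differential $V \mapsto CV$, and a direct computation gives $g_{CA}(CV,CW) = \mathrm{tr}(A^{-1}VA^{-1}W) = g_A(V,W)$, so $L_C$ is an isometry of $(GL_n,g)$. Applying this with $C = U \in O_n$, the restriction $L_U|_{\mathcal{P}_n} : \mathcal{P}_n \to \hat{\mathcal{L}}_U$ is an isometry onto the leaf.

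\textbf{Step 4: Total geodesicity.} By Proposition \ref{tenRiemGLS}(a), $\mathcal{P}_n = GLSym_n(n)$ is totally geodesic in $(GL_n,g)$. Since $L_U$ is an isometry of the ambient manifold $(GL_n,g)$ mapping $\mathcal{P}_n$ onto $\hat{\mathcal{L}}_U$, it maps geodesics of $(GL_n,g)$ contained in $\mathcal{P}_n$ to geodesics of $(GL_n,g)$ contained in $\hat{\mathcal{L}}_U$, so $\hat{\mathcal{L}}_U$ is totally geodesic in $(GL_n,g)$ as well.

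The only genuinely non-routine ingredient is the smoothness and bijectivity of the polar decomposition in Step 2 (needed to upgrade the set-theoretic partition to a bona fide smooth foliation); everything else is a direct verification or an immediate transport through the isometry $L_U$, so I do not expect a serious obstacle.
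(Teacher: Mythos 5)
Your argument is correct and follows essentially the same route as the paper: the polar decomposition gives both the identity $\hat{\mathcal{L}}_U=\hat{\mathcal{R}}_U$ (via $Q'=UQU^T$) and the partition of $GL_n$, while the fact that left translations are isometries of $(GL_n,g)$ transports the isometry type and the total geodesicity of $\mathcal{P}_n$ (Proposition \ref{tenRiemGLS}\,(a)) to each leaf. The only difference is that you spell out the smoothness of the polar-decomposition diffeomorphism and the isometry computation for $L_C$, which the paper handles by citation.
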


\begin{proof}
From the \emph{polar decomposition} (see for instance \cite{HoJ1985} Thm.\,7.3.1), for every matrix $A \in GL_n$ there is a unique $U \in O_n$ and there are unique $Q, Q' \in \mathcal{P}_n$ such that $A=UQ=Q'U$ (so: $Q'= UQU^T$). This gives that each $A \in GL_n$ belongs to an unique $\hat{\mathcal{L}}_U$ and gives also the equality $\hat{\mathcal{L}}_U = \hat{\mathcal{R}}_U$. 
We get the last part of the statement, because, by \cite{DoPe2015} Prop.\,1.2, left translations are isometries of $(GL_n, g)$ (remember also \ref{tenRiemGLS} (a)).
\end{proof}

\begin{rem}
The roles of $O_n$ and of $\mathcal{P}_n$ in the previous Proposition are mutually interchangeable: indeed, in \cite{DoPe2016} Prop.\,4.5, we proved that $GL_n$ has analogous foliations: 
$
GL_n = \bigcup_{Q \in \mathcal{P}_n} \mathcal{L}_Q =  \bigcup_{Q \in \mathcal{P}_n} \mathcal{R}_Q 
$
whose leaves are all isometric to $(O_n, g)$ and totally geodesic in $(GL_n,g)$.
\end{rem}

\section{The group of isometries of $(\mathcal{P}_n, g)$}

\begin{lemma}
Let $\mathbf{G}$ be a connected Lie group and $\widetilde{\mathbf{G}}$ be its universal covering group. Then $Out(\mathbf{G})$ is isomorphic to a subgroup of $Out(\widetilde{\mathbf{G}})$.
\end{lemma}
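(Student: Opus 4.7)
The plan is to construct a natural homomorphism $\Phi: Aut(\mathbf{G}) \to Aut(\widetilde{\mathbf{G}})$ by lifting automorphisms through the universal covering, show it sends $Inn(\mathbf{G})$ into $Inn(\widetilde{\mathbf{G}})$, and verify that the induced map on quotients is injective.

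First I would fix the covering homomorphism $p: \widetilde{\mathbf{G}} \to \mathbf{G}$. Given $f \in Aut(\mathbf{G})$, the composition $f \circ p: \widetilde{\mathbf{G}} \to \mathbf{G}$ is a Lie group homomorphism from a simply connected group, so by the standard lifting property of universal covers in the category of Lie groups, there exists a unique Lie group homomorphism $\tilde{f}: \widetilde{\mathbf{G}} \to \widetilde{\mathbf{G}}$ with $p \circ \tilde{f} = f \circ p$. Applying the same construction to $f^{-1}$ and invoking the uniqueness of lifts, one gets $\widetilde{f^{-1}} \circ \tilde{f} = \widetilde{\mathrm{id}} = \mathrm{id}_{\widetilde{\mathbf{G}}}$, so $\tilde{f}$ is actually an automorphism. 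Uniqueness of lifts likewise yields $\widetilde{f \circ h} = \tilde{f} \circ \tilde{h}$, which makes $\Phi(f) := \tilde{f}$ a group homomorphism $\Phi: Aut(\mathbf{G}) \to Aut(\widetilde{\mathbf{G}})$.

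Next I would check that $\Phi$ carries inner automorphisms to inner ones. For $f = c_g$ conjugation by $g \in \mathbf{G}$, pick any $\tilde{g} \in p^{-1}(g)$ and consider $c_{\tilde{g}}: \widetilde{\mathbf{G}} \to \widetilde{\mathbf{G}}$. A direct computation gives $p(c_{\tilde{g}}(\tilde{x})) = g\, p(\tilde{x})\, g^{-1} = f(p(\tilde{x}))$, so $c_{\tilde{g}}$ is a lift of $f \circ p$; by the uniqueness clause, $\tilde{f} = c_{\tilde{g}} \in Inn(\widetilde{\mathbf{G}})$. Hence $\Phi$ descends to a homomorphism
\begin{equation*}
\bar{\Phi}: Out(\mathbf{G}) = Aut(\mathbf{G})/Inn(\mathbf{G}) \longrightarrow Aut(\widetilde{\mathbf{G}})/Inn(\widetilde{\mathbf{G}}) = Out(\widetilde{\mathbf{G}}).
\end{equation*}

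The final and most delicate step is injectivity of $\bar{\Phi}$. Suppose $\tilde{f} = \Phi(f)$ is inner, say $\tilde{f} = c_{\tilde{g}}$ for some $\tilde{g} \in \widetilde{\mathbf{G}}$. Setting $g := p(\tilde{g}) \in \mathbf{G}$ and using the defining relation $p \circ \tilde{f} = f \circ p$, one obtains
\begin{equation*}
f(p(\tilde{x})) = p(c_{\tilde{g}}(\tilde{x})) = g\, p(\tilde{x})\, g^{-1} \quad\text{for every } \tilde{x} \in \widetilde{\mathbf{G}}.
\end{equation*}
Since $p$ is surjective, this forces $f = c_g$, so $f \in Inn(\mathbf{G})$ and $\bar{\Phi}$ has trivial kernel. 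The main point requiring care is the uniqueness of lifts in the very first step (which relies on $\widetilde{\mathbf{G}}$ being simply connected and $p$ being a local diffeomorphism); once that is in hand, the rest is a tidy diagram chase using the surjectivity of $p$.
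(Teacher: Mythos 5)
Your proposal is correct and takes essentially the same route as the paper: both lift each automorphism of $\mathbf{G}$ through the universal covering to obtain a homomorphism $Aut(\mathbf{G}) \to Out(\widetilde{\mathbf{G}})$ whose kernel is exactly $Inn(\mathbf{G})$. You merely make explicit the two inclusions (inner lifts to inner, and a lift that is inner forces the original to be inner) that the paper's one-line proof leaves to the reader.
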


\begin{proof}
For every $\alpha  \in Aut(\mathbf{G})$ we denote by $\tilde{\alpha}: \widetilde{\mathbf{G}} \to \widetilde{\mathbf{G}}$ the unique lift of $\alpha$ such that $\tilde{\alpha}(1_{\widetilde{\mathbf{G}}}) = 1_{\widetilde{\mathbf{G}}}$; then $\tilde{\alpha}$ is in $Aut(\widetilde{\mathbf{G}})$ and the mapping which associates to ${\alpha}$ the equivalence class of  $\tilde{\alpha}$ in $Out(\widetilde{\mathbf{G}})$ is a group homomorphism with kernel $Inn(\mathbf{G})$.
\end{proof}

\begin{prop}\label{ClassAut}
$Aut(SO_n) \simeq  SO_n$ if $n$ is odd and
$Aut(SO_n / \{\pm I_n\}) \simeq O_n / \{\pm I_n\}$ if $n \ne 8$ is even,
where the groups on the right act by conjugation.
\end{prop}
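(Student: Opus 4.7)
My plan is to combine the preceding Lemma, which bounds $Out$ of a connected Lie group in terms of $Out$ of its universal cover, with the description of $Out(Spin_n)$ recalled in \ref{notazioni}, and then realize every automorphism concretely by conjugation. The key representation-theoretic input is Schur's Lemma applied to the standard representation of $SO_n$ on $\mathbb{R}^n$, which is absolutely irreducible for $n\ge 3$; this controls both the centers involved and the kernel of the conjugation action.

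For \textbf{odd $n=2m+1$}, the preceding Lemma together with $Out(Spin_n)=1$ immediately yields $Aut(SO_n)=Inn(SO_n)\simeq SO_n/Z(SO_n)$. Since Schur forces $Z(SO_n)$ to consist of real scalars lying in $SO_n$, and the only such scalar is $I_n$ (as $n$ is odd), one concludes $Aut(SO_n)\simeq SO_n$ acting by conjugation.

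For \textbf{even $n=2m\ne 8$}, set $\overline{SO}_n:=SO_n/\{\pm I_n\}$ and $\overline{O}_n:=O_n/\{\pm I_n\}$. Because $\overline{SO}_n$ has Lie algebra $\mathfrak{so}_n$ and $Spin_n$ is simply connected, $Spin_n$ is the universal cover of $\overline{SO}_n$, so the Lemma gives $|Out(\overline{SO}_n)|\le |Out(Spin_n)|=2$. I would then analyse the conjugation homomorphism $\chi:\overline{O}_n\to Aut(\overline{SO}_n)$. For injectivity: if $\chi([U])=\mathrm{id}$, then $UAU^{-1}=\epsilon(A)\,A$ for all $A\in SO_n$, with $\epsilon(A)\in\{\pm 1\}$; the map $\epsilon$ is continuous on the connected group $SO_n$ and $\epsilon(I_n)=+1$, forcing $\epsilon\equiv +1$, so $U$ centralises $SO_n$ and Schur yields $U=\pm I_n$, i.e.\ $[U]=1$. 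For surjectivity: the image contains $Inn(\overline{SO}_n)=\overline{SO}_n$ (the center of $\overline{SO}_n$ is trivial since $Z(SO_n)=\{\pm I_n\}$ for $n$ even); and for any $U\in O_n$ with $\det U=-1$ the class $[U]$ does not lie in $\overline{SO}_n$ (both $\pm I_n$ have determinant $+1$), so $\chi([U])$ is non-inner. Combined with $|Out(\overline{SO}_n)|\le 2$, this forces $\chi$ to be surjective, giving the claimed isomorphism.

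The main obstacle I expect is the even-case surjectivity argument: one must verify that conjugation by a reflection $U\in O_n$ with $\det U=-1$ descends to a genuine automorphism of $\overline{SO}_n$ and is non-inner, which ultimately rests on the essential (and somewhat deceptively simple) fact that $\{\pm I_n\}\subseteq SO_n$ precisely when $n$ is even. The injectivity side also requires the rigidity statement that a locally constant $\pm 1$-valued sign on the connected group $SO_n$ is constant, but this is routine once the framework is set up.
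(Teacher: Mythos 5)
Your argument follows the same skeleton as the paper's: the preceding Lemma plus the computation of $Out(Spin_n)$ bounds the outer automorphism group, and the automorphisms are then realized concretely by conjugation. For $n\ge 3$ you in fact supply more detail than the paper does: the paper merely asserts that conjugation by an element of $O_n\!\setminus\!SO_n$ is an outer automorphism of $SO_n/\{\pm I_n\}$, whereas your Schur-plus-connectedness argument proves injectivity of the conjugation homomorphism $O_n/\{\pm I_n\}\to Aut(SO_n/\{\pm I_n\})$, which is exactly what is needed both for that assertion and for the final isomorphism. This part is correct; the slightly shaky justification that $Z(SO_n/\{\pm I_n\})$ is trivial is harmless, since that triviality is itself a consequence of the injectivity you establish.

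The genuine gap is $n=2$, which the statement includes (``$n\ne 8$ is even'') and which the paper treats as a separate case. Every ingredient of your even-$n$ argument fails there: $Spin_2\simeq S^1$ is not simply connected, so it is not the universal cover of $SO_2/\{\pm I_2\}$ and the Lemma gives no bound on $Out$; the standard representation of $SO_2$ on $\mathbb{R}^2$ is irreducible but not absolutely irreducible, so Schur gives commutant $\mathbb{C}$ rather than $\mathbb{R}$; and the conjugation homomorphism $O_2/\{\pm I_2\}\to Aut(SO_2/\{\pm I_2\})$ is far from injective, since $SO_2$ is abelian and every rotation acts trivially. Indeed $Aut(S^1)\simeq\mathbb{Z}_2$ while $O_2/\{\pm I_2\}$ is one-dimensional, so at $n=2$ only the surjectivity of the conjugation map (which is what the paper actually uses later, in the Claim of Theorem \ref{isomSLP}) can be salvaged, not the stated isomorphism. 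You should either treat $n=2$ by hand, as the paper does, or state explicitly that your argument covers only even $n\ge 4$.
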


\begin{proof}
The following arguments need some facts recalled in \ref{notazioni}.

If $n$ is odd, the result follows from the previous Lemma and from the fact that $Out(Spin_n)$ is trivial.

If $n$ is even and different from $2$, then $Spin_n$ is also the universal covering of $SO_n / \{\pm I_n\}$. 
If $n\ne 8$ too, by the previous Lemma, $Out(SO_n / \{\pm I_n\})$ has at most two elements. Since any conjugation by elements of $O_n\!\setminus\!SO_n$ is an outer automorphism of $SO_n / \{\pm I_n\}$, then $Out(SO_n/ \{\pm I_n\}) \simeq \mathbb{Z}_2$ and this gives the assertion.

If $n=2$, then $SO_2/ \{\pm I_2\} \simeq S^1$ (the circle), $Aut(S^1) = Out(S^1) \simeq \mathbb{Z}_2$ (with elements the identity and the complex inversion) and so $Out(SO_2/ \{\pm I_2\}) \simeq \mathbb{Z}_2$ and this allows to conclude as above.  
\end{proof}

\begin{thm}\label{isomSLP}
A mapping $G: (SL\mathcal{P}_n, g) \to (SL\mathcal{P}_n, g)$ is an isometry if and only if there exists a matrix $X \in GL_n$ with $det(X) = \pm 1$ such that (with the notations of \ref{notazioni})
\begin{center}
$G = \Gamma_X$\ \  or \ \  $G = \Gamma_X \circ \varphi$.
\end{center}
Moreover $G$ fixes $I_n$ if and only if the matrix $X$ belongs to $O_n$.
\end{thm}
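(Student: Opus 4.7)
The ``if'' direction follows from Proposition \ref{geodSLSym}(c), which records that congruences with $|\det|=1$ and the inversion $\varphi$ are all isometries of $(SL\mathcal{P}_n,g)$. For the converse, the strategy is to reduce, via transitivity, to isometries fixing $I_n$ and then classify these by means of the representation-theoretic tools set up in Proposition \ref{ClassAut}.

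Since $SL_n$ acts transitively on $SL\mathcal{P}_n$ by congruences (Proposition \ref{geodSLSym}(a)), given an isometry $G$ one picks $Y_0\in SL_n$ with $\Gamma_{Y_0}(I_n)=G(I_n)$ and replaces $G$ by $\Gamma_{Y_0^{-1}}\circ G$, thereby reducing to the case $G(I_n)=I_n$. It then suffices to show that every such $G$ has the form $\Gamma_U$ or $\Gamma_U\circ\varphi$ for some $U\in O_n$. Because $(SL\mathcal{P}_n,g)$ is a simply-connected, nonpositively-curved symmetric space (Proposition \ref{studioP_n}(a)), the exponential $\exp_{I_n}$ is a diffeomorphism, so any isometry fixing $I_n$ is determined by its differential $L:=dG_{I_n}$, which lies in $O(Sym_n^0)$ with $Sym_n^0=T_{I_n}(SL\mathcal{P}_n)$. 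Let $H\subseteq O(Sym_n^0)$ denote the image of the full isotropy at $I_n$ under $G\mapsto dG_{I_n}$. The congruences $\Gamma_U$ with $U\in SO_n$ act as $X\mapsto UXU^T$, realising $SO_n$ ($n$ odd) or $SO_n/\{\pm I_n\}$ ($n$ even) as a subgroup of $H$; by the classical theory of irreducible symmetric spaces of noncompact type, this subgroup coincides with the identity component $H_0$.

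The crux is the analysis of the finite group $H/H_0$: since $H$ normalizes $H_0$, conjugation yields a homomorphism $\Phi:H\to\mathrm{Aut}(H_0)$, and by Proposition \ref{ClassAut} every element of $\mathrm{Aut}(H_0)$ is realized by conjugation by some $U\in O_n$. Hence, after post-composing $G$ with an appropriate $\Gamma_{U^{-1}}$, one may assume that $L$ centralizes $H_0$. The irreducibility of $Sym_n^0$ as a real $SO_n$-module (for $n\ge 3$), combined with Schur's lemma over $\mathbb{R}$, forces the centralizer of $H_0$ in $O(Sym_n^0)$ to be exactly $\{\pm\mathrm{id}\}$; since $d\varphi_{I_n}=-\mathrm{id}$, the residual $L$ is either the identity or $d\varphi_{I_n}$, giving $G=\Gamma_U$ or $\Gamma_U\circ\varphi$ as required. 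The last assertion follows from the identity $\Gamma_X(I_n)=(\Gamma_X\circ\varphi)(I_n)=XX^T$, which equals $I_n$ if and only if $X\in O_n$.

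The main obstacle is this classification step: rigorously identifying $H_0$ with the image of $SO_n$ (which requires results on the full isometry group of a symmetric space of noncompact type), lifting the abstract automorphism classification of Proposition \ref{ClassAut} to concrete linear maps of $Sym_n^0$, and dealing with the two anomalies at $n=2$ (where $Sym_n^0$ carries a complex structure as an $SO_2$-module, so the Schur centralizer is larger and one must instead invoke the hyperbolic-plane identification of Proposition \ref{studioP_n}(d)) and at $n=8$ (triality, which places $\mathrm{Aut}(H_0)$ outside the scope of Proposition \ref{ClassAut} and requires a separate argument).
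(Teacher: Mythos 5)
Your overall strategy coincides with the paper's: reduce to isometries fixing $I_n$, identify the identity component of the image of the isotropy group under the differential with $\rho(SO_n)$ (the paper gets this from Helgason's description of $\mathcal{I}^0(SL\mathcal{P}_n,g)$ plus a dimension count and invariance of domain), use Proposition \ref{ClassAut} to replace the automorphism induced by conjugation with conjugation by some $\rho(Y)$, $Y\in O_n$, and finally apply Schur's lemma to the irreducible $SO_n$-module $Sym_n^0$ to force the residual map to be $\pm\,\mathrm{id}$, i.e.\ $d\chi=\pm\rho(Y)$ and $\chi=\Gamma_Y$ or $\Gamma_Y\circ\varphi$. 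The closing assertion about $O_n$ is handled identically.

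The genuine gap is the case $n=8$. You correctly observe that triality places $\mathrm{Aut}(SO_8/\{\pm I_8\})$ outside the scope of Proposition \ref{ClassAut} and that a ``separate argument'' is needed, but you do not supply one; as it stands the theorem is therefore not proved for $n=8$. This is not a minor omission: it is the longest and most delicate step of the paper's proof. There one lifts the automorphism $\alpha$ of $\rho(SO_8)$ through the universal covering $F=\rho\circ\pi_8:Spin_8\to\rho(SO_8)$ to some $\tilde{\alpha}\in Aut(Spin_8)$, writes $\tilde{\alpha}=\tilde{\tau}_H^{\,k}\circ\tilde{\gamma}^{\,p}$ modulo inner automorphisms (with $\tilde{\tau}_H$ lifting conjugation by a fixed $H\in O_8\!\setminus\!SO_8$ and $\tilde{\gamma}$ an order-three outer automorphism), and rules out $p=1,2$ by showing that the representations $F$, $F\circ\tilde{\gamma}$, $F\circ\tilde{\gamma}^{\,2}$ of $Spin_8$ on $Sym_8^0$ are pairwise non-equivalent: they are the trace-free second symmetric powers of the three $8$-dimensional representations of highest weights $(1,0,0,0)$, $(0,0,1,0)$, $(0,0,0,1)$, hence have highest weights $(2,0,0,0)$, $(0,0,2,0)$, $(0,0,0,2)$. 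Since the conjugation identity forces $F$ and $F\circ\tilde{\alpha}$ to be equivalent, only $p=0$ survives. Some argument of this kind must be included. By contrast, your worry about $n=2$ is less serious than you suggest: Proposition \ref{ClassAut} as stated in the paper does cover $n=2$, and although the complexification of $Sym_2^0$ is reducible (so the real Schur commutant is $\mathbb{C}$ rather than $\mathbb{R}$), the extra orthogonal elements of the commutant are precisely the elements of $\rho(SO_2)$ itself, so the conclusion $d\chi=\rho(Y')$ still follows; this is consistent with Remark \ref{rem-dopo-isom-SLP}, where $\varphi$ on $SL\mathcal{P}_2$ is itself a congruence. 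Your alternative route through the hyperbolic-plane identification of Proposition \ref{studioP_n}(d) would also work, but it is not needed.
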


\begin{proof}
The mappings $\Gamma_X$ and $\Gamma_X \circ \varphi$ with $det(X) = \pm1$ are isometries by \ref{geodSLSym}\,(c).

For the converse, up to congruences with matrices of determinant $\pm 1$, we can assume that $G$ fixes $I_n$. Indeed $G(I_n) \in SL\mathcal{P}_n$ and so $G(I_n) = BB^T$ for some $B \in GL_n$ with $det(B) = \pm 1$. Therefore $(\Gamma_{B^{-1}} \circ G)(I_n) = I_n$.

Let $G$ be an isometry of $(SL\mathcal{P}_n, g)$ fixing $I_n$, $\mathcal{J}$ be the group of isometries of $(SL\mathcal{P}_n, g)$, $\mathcal{J}_{_{I_n}}$ be the corresponding  subgroup of isotropy at $I_n$ and $\mathcal{J}^0$, $\mathcal{J}_{_{I_n}}^0$ be their connected components containing the identity.
 
Since $(SL\mathcal{P}_n, g)$ is homogeneous Riemannian (remember \ref{studioP_n}), we have: $SL\mathcal{P}_n \simeq \mathcal{J}/ \mathcal{J}_{_{I_n}}$.

Remembering that $SL\mathcal{P}_n \simeq SL_n/SO_n$ (\ref{rem-iniz} (b) and (c)), from \cite{Helg2001} V Th.\,4.1 (i), we get that
$\mathcal{J}^0 \simeq SL_n$ if $n$ in odd and $\mathcal{J}^0 \simeq SL_n/\!\{\pm\,I_n\}$ if $n$ in even.

Indeed it is well-known that $SL_n$ is a connected simple Lie group and the actions of $ SL_n$ (if $n$ is odd) and  of $ SL_n/\{\pm\,I_n\}$ (if $n$ is even) are both effective.

From this we get that $dim(\mathcal{J})=dim(\mathcal{J}^0) = dim(SL_n)$ and therefore $dim(SO_n) = dim(\mathcal{J}_{I_n}) = dim(\mathcal{J}_{I_n}^0)$.

Let us consider the representation $\rho: O_n \to Aut(Sym_n^0)$ defined by $\rho(X) \cdot A = XAX^T$. Arguing on the matrices $E^{(i,j)} + E^{(j,i)}$ and $E^{(i,i)} - E^{(j,j)}$ for every $i \ne j$, we get that $Ker(\rho) = \{\pm\,I_n\}$.
Let us consider also the representation
$d: \mathcal{J}_{_{I_n}} \to Aut(Sym_n^0)$ defined by the differential at $I_n$ of every element of $\mathcal{J}_{_{I_n}}$ (remember that $T_{I_n} (SL\mathcal{P}_n) = Sym_n^0$). 
By \cite{O'N1983} Ch.\,3 Prop.\,62, $d$ is a faithful representation and so: $d(\mathcal{J}_{_{I_n}}^0) = (d(\mathcal{J}_{_{I_n}}))^0$ (the component of $d(\mathcal{J}_{_{I_n}})$ containing the identity).

Since congruences by orthogonal matrices are linear isometries fixing $I_n$, we get the inclusion $\rho(SO_n) \subseteq (d(\mathcal{J}_{_{I_n}}))^0$. Since these manifolds have the same dimension and are connected, by \emph{theorem of invariance of domain}, we deduce that $\rho(SO_n) = (d(\mathcal{J}_{_{I_n}}))^0$.

For a fixed $\chi \in \mathcal{J}_{_{I_n}}$, the previous equality gives: $d \chi \, \rho(SO_n) \, d \chi^{-1} = \rho(SO_n)$. Hence there exists a unique automorphism $\alpha$ of $\rho(SO_n)$ such that: 

(*) \ \ \ \ \ \ \ \ \ \ \ \ \ \ \ \ \ \ \ $d \chi \, \rho(X) \, d \chi^{-1} = \alpha(\rho(X))$ for every $X \in SO_n$.

\smallskip
\underline{Claim}. There is $Y \in O_n$ such that $\alpha(\rho(X))= \rho(Y) \rho(X) \rho(Y)^{-1}$ for every $X \in SO_n$ and so, by (*), we get $(\rho(Y)^{-1} \, d\,\chi) \rho(X)= \rho(X) (\rho(Y)^{-1} \, d\,\chi)$ for every $X \in SO_n$. 

\smallskip

Indeed $\rho(SO_n) \simeq SO_n$ if $n$ is odd and $\rho(SO_n) \simeq SO_n/\{\pm I_n\}$ if $n$ is even.

Hence, when $n\ne 8$, the claim follows by \ref{ClassAut}.

The case $n=8$ needs different arguments.
 
First of all, we note that $F= \rho \circ \pi_8: Spin_8 \to \rho(SO_8)$ is the universal covering of $\rho(SO_8)$; so the automorphism $\alpha \in Aut(\rho(SO_8))$ can be lifted to a unique $\tilde{\alpha} \in Aut(Spin_8)$ such that $F \circ \tilde{\alpha} = \alpha \circ F$.

As recalled in \ref{notazioni}, $Out(Spin_8)$ is isomorphic to the dihedral group $\mathbf{Dih}_3$ and therefore it is the group of order $6$ generated by elements $\delta$ and $\gamma$ of order $2$ and $3$ respectively (see also \cite{FuHa2004} and \cite{LaMi1989} for further details). In particular we can assume that $\delta$ is the equivalence class of the lifting $\tilde{\tau}_H$ of the conjugation $\tau_H$ in $SO_8$ associated to a fixed matrix $H \in O_8\!\setminus\!SO_8$ and $\gamma$ is the equivalence class of an automorphism $\tilde{\gamma}$ of $Spin_8$ having the unit $1 \in Spin_8$ as unique fixed point in the fiber $ker(F)$ (see for instance \cite{LaMi1989} Ch.\,1 \S 8).

Therefore, up to inner automorphisms, we can assume that $\tilde{\alpha}= \tilde{\tau}_H^k \circ \tilde{\gamma}^p$ with $k=0,1$ and $p=0,1,2$. 

We prove that the unique admissible possibilities are $k=0,1$ and $p=0$.

From (*) above, we deduce that $d \chi \, F(Z) \, d \chi^{-1} = \alpha(F(Z))) = F(\tilde{\alpha}(Z))$ for every $Z \in Spin_8$ (up to inner automorphisms); this implies that $F$ and $F \circ \tilde{\alpha}$ are equivalent representations of $Spin_8$.

The cases $k=0$ and $p=1,2$ are impossible because $F$, $F \circ \tilde{\gamma}$ and $F \circ \tilde{\gamma}^2$ are non-equivalent. 

Indeed, by standard facts from Lie group representation theory (and with the help of the package \emph{LiE} \cite{LCL1992}), we get that the representations $\pi_8$, $\pi_8 \circ \tilde{\gamma}$ and $\pi_8 \circ \tilde{\gamma}^2$ correspond to \emph{maximal weights} $(1,0,0,0)$, $(0,0,1,0)$ and $(0,0,0,1)$ respectively, while their trace-free second symmetric powers are $F$, $F \circ \tilde{\gamma}$ and $F \circ \tilde{\gamma}^2$ and correspond to $(2,0,0,0)$, $(0,0,2,0)$ and $(0,0,0,2)$ respectively and so they are mutually non-equivalent.

If $k=1$ and $p=1,2$, since $\tilde{\tau}_H$ is the lifting of the automorphism $\tau_H$ of $SO_8$, we can argue as above with $\rho(H)^{-1} \circ d\chi $ instead of $d\chi$.
Therefore, up to inner automorphisms, $\tilde{\alpha} = \tilde{\tau}_H^k$ with $k=0,1$ and this allows to conclude the proof of the Claim.

\smallskip 

Since the action of $\rho(SO_n)$ on $Sym_n^0$ is irreducible (see for instance \cite{KoNo2} Ch.\,XI Prop.\,7.4\,(1) and \cite{Ha2003} Prop.\,4.5\,(1)), by \cite{KoNo1} App.\,5, Lemma\,1, we obtain:
$\rho(Y)^{-1}\, d\,\chi = a\,Id + b\,J$ ($Id$ is the identity of $Sym_n^0$), 

where $a, b \in \mathbb{R}$,  $J^2 = -Id$ and $dim(Sym_n^0)$ even in case of $b\ne 0$.

If $b$ would be non-zero, then the complexification of $Sym_n^0$ should have the eigenspaces of the complexification of $J$ as invariant subspaces with respect to the complexification of the representation; so the complexification of the representation of $\rho(SO_n)$ would be reducible, while it is actually irreducible (see \cite{Ha2003} Prop.\,4.5\,(1) and Prop.\,4.6). 

Hence $b=0$ and $\rho(Y)^{-1}\, d\,\chi = a\,Id$ and so, being an isometry, we get: $a= \pm 1$, i.e. $d\, \chi = \pm\rho(Y)$. In case of $a=1$, then $\chi(A) = YAY^T = \Gamma_Y(A)$, while in case of $a=-1$, then $\chi(A) = YA^{-1}Y^T = \Gamma_Y(A^{-1})=(\Gamma_Y \circ \varphi) (A)$ (remember for instance \cite{O'N1983} Ch.\,3 Prop.\,62). This concludes the proof.
\end{proof}

\begin{rem}\label{rem-dopo-isom-SLP} 
When $n=2$, the inversion $\varphi$ in $SL\mathcal{P}_2$ is the congruence associated to the rotation matrix  
$W=\left(\begin{array}{cc}
0 & -1 \\ 
1 & 0
\end{array}\right)$ and $\mathcal{I}(SL\mathcal{P}_2,g)$ consists only of the indicated congruences and has two connected components, with $\mathcal{I}^0(SL\mathcal{P}_2,g)$ isomorphic to $SL_2/\{\pm\,I_2\}$.

If $n \ge 3$, then $\varphi$ is not a congruence and the possibilities of the previous Theorem are mutually exclusive, moreover we get that:

- if $n$ is odd, then $\mathcal{I}(SL\mathcal{P}_n,g)$ is a Lie group with two connected components, with $\mathcal{I}^0(SL\mathcal{P}_n,g)$ isomorphic to $SL_n$;

- if $n \ne 2$ is even, then $\mathcal{I}(SL\mathcal{P}_n,g)$ is a Lie group with four connected components, with $\mathcal{I}^0(SL\mathcal{P}_n,g)$ isomorphic to $SL_n/\{\pm\,I_n\}$.
\end{rem}

\begin{prop}\label{isomP_n-prel}
A mapping $L: (\mathcal{P}_n, g) \to (\mathcal{P}_n, g)$ is an isometry if and only if
there exist an isometry $G: (SL\mathcal{P}_n, g) \to (SL\mathcal{P}_n, g)$ and a matrix $B \in GL_n$ such that

$L(A)= (det\,A)^{1/n}\,(\Gamma_B \circ G)(\dfrac{A}{(det\,A)^{1/n}})$ for every $A \in \mathcal{P}_n$ or

$L(A)= (det\,A)^{-1/n} \,(\Gamma_B \circ G)(\dfrac{A}{(det\,A)^{1/n}})$ for every $A \in \mathcal{P}_n$.
\end{prop}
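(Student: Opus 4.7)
The plan is to apply the de Rham decomposition theorem to $(\mathcal{P}_n, g)$, which is simply connected and complete by \ref{studioP_n}\,(a) and, by \ref{decopositione-p-n}, splits via the explicit isometry $F(Q,x)=e^{x/\sqrt n}Q$ into $(SL\mathcal{P}_n\times\mathbb{R},g\times h)$. Since $SL\mathcal{P}_n\simeq SL_n/SO_n$ is an irreducible Riemannian symmetric space (as noted in the Remark preceding \ref{LOG}), this splitting is the de Rham decomposition, with irreducible factor $SL\mathcal{P}_n$ and flat one-dimensional factor $\mathbb{R}$.

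Given an isometry $L$ of $(\mathcal{P}_n, g)$, the first step is to transfer it via $F$ to the isometry $\widetilde L = F^{-1}\circ L\circ F$ of the Riemannian product $(SL\mathcal{P}_n\times\mathbb{R},g\times h)$. By the de Rham theorem (\cite{KoNo1} IV Thm.\,6.2), $\widetilde L$ permutes the irreducible de Rham factors and preserves the flat one; since $\dim SL\mathcal{P}_n = \tfrac{n(n+1)}{2}-1\ge 2 > 1 = \dim\mathbb{R}$, no swap is possible, so $\widetilde L(Q,x)=(G(Q),\sigma(x))$ with $G\in\mathcal{I}(SL\mathcal{P}_n,g)$ and $\sigma(x)=\varepsilon x+c$, $\varepsilon=\pm 1$, $c\in\mathbb{R}$. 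Unpacking $L = F\circ\widetilde L\circ F^{-1}$ with $F^{-1}(A)=(A/(\det A)^{1/n},(\ln\det A)/\sqrt n)$ then yields
\[
L(A)=(\det A)^{\varepsilon/n}\,e^{c/\sqrt n}\,G\!\left(\frac{A}{(\det A)^{1/n}}\right),
\]
and the positive scalar $e^{c/\sqrt n}$ is absorbed into a congruence by choosing $B=e^{c/(2\sqrt n)}I_n\in GL_n$, so that $\Gamma_B(M)=e^{c/\sqrt n}M$; the two cases $\varepsilon=\pm 1$ reproduce the two forms asserted in the statement.

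The converse is routine: writing an arbitrary $B\in GL_n$ as $B=\mu B'$ with $\mu>0$ and $\det B'=\pm 1$, the map $\Gamma_{B'}\circ G$ lies in $\mathcal{I}(SL\mathcal{P}_n,g)$ by \ref{geodSLSym}\,(c), and any map of either of the two stated shapes conjugates through $F$ to $(Q,x)\mapsto ((\Gamma_{B'}\circ G)(Q),\,\varepsilon x + c')$, a product of isometries of the two factors, hence an isometry of $(\mathcal{P}_n, g)$.

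The one delicate point will be the careful invocation of the de Rham uniqueness theorem: verifying that the product decomposition given by $F$ is genuinely the de Rham decomposition and exploiting the dimension mismatch between the two factors to exclude any nontrivial permutation. The remainder is straightforward algebraic bookkeeping with $F$ and $F^{-1}$.
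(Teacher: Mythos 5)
Your argument is correct, but it resolves the crucial step by a genuinely different mechanism than the paper. Both proofs conjugate $L$ through the explicit isometry $F$ to an isometry $\widetilde{L}$ of the product $(SL\mathcal{P}_n\times\mathbb{R},\,g\times h)$ and then unwind, and the bookkeeping (including absorbing $e^{c/\sqrt{n}}$ into $\Gamma_B$ with $B=e^{c/(2\sqrt{n})}I_n$) is identical; the difference is how one shows that $\widetilde{L}$ splits as $(Q,x)\mapsto (G(Q),\pm x+c)$. You appeal to the uniqueness part of the de Rham decomposition (which the paper records, but only in a remark, without using it here): since the Euclidean factor is characterized as the holonomy-fixed part and $SL\mathcal{P}_n$ is the unique irreducible factor, any isometry preserves both canonical foliations and hence is a product of isometries of the factors. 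The paper instead argues pointwise: it normalizes so that the isometry $H$ fixes $I_n$, uses the explicit curvature formula of \ref{tenRiemGLS}\,(b) to show $dH_{I_n}(I_n)=\pm I_n$, restricts $dH_{I_n}$ to $Sym_n^0$, invokes the rigidity theorem for simply connected complete symmetric spaces (\cite{KoNo1} VI Cor.\,7.9) to produce $G$, and identifies $H$ with $F\circ G^{\pm}\circ F^{-1}$ by matching values and differentials at one point (\cite{O'N1983} Ch.\,3 Prop.\,62). Your route is shorter and conceptually cleaner; its only cost is that the statement ``an isometry of a de Rham product fixes the flat factor and permutes the irreducible ones'' is a standard consequence of the uniqueness theorem rather than the literal content of \cite{KoNo1} IV Thm.\,6.2, so you should either cite it precisely (e.g.\ the discussion around \cite{KoNo1} VI Thm.\,3.5) or include the short holonomy argument. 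The paper's route is constructive and uses only the curvature computation plus the extension theorem for curvature-preserving linear isometries. Your converse direction coincides with the paper's.
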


\begin{proof}
By \ref{studioP_n} (a) and \ref{decopositione-p-n}, we have the isometry $F: SL\mathcal{P}_n \times \mathbb{R} \to \mathcal{P}_n$ given by 

$F(Q,x) = e^{\frac{x}{\sqrt{n}}} Q$ and $F^{-1}(P) = (\dfrac{P}{\sqrt[n]{det(P)}}, \dfrac{ln(det(P))}{\sqrt{n}})$. Hence $L: \mathcal{P}_n \to \mathcal{P}_n$ is an isometry if and only if $\overline{L} = F^{-1}\circ L \circ F$ is an isometry of $SL\mathcal{P}_n \times \mathbb{R}$.

Let  $\overline{L}^{\pm}$ be the isometries of $(SL\mathcal{P}_n \times \mathbb{R}, g \times h)$ defined by $\overline{L}^{\pm} (Q, x)= (G(Q), \pm x +b)$ where $G$ is a fixed isometry of $(SL\mathcal{P}_n, g)$ and $b$ is a fixed real number. Then $L^{\pm}=F \circ \overline{L}^{\pm} \circ F^{-1}$ are isometries of $(\mathcal{P}_n, g)$. 
After denoting $B=e^{b/(2\,\sqrt{n})} I_n$, by standard computations, we get: 
$L^{\pm}(A) = det(A)^{\pm 1/n} \,(\Gamma_B \circ G)(\dfrac{A}{(det\,A)^{1/n}})$.

For the converse, let $L$ be an isometry of $(\mathcal{P}_n, g)$. Since $L(I_n) \in \mathcal{P}_n$, there exists $B \in GL_n$ such that $L(I_n) = BB^T$ and so $H=\Gamma_{B^{-1}} \circ L$ is an isometry of $(\mathcal{P}_n, g)$ fixing $I_n$. 

Let us consider the differential: $D=dH_{I_n}: T_{I_n}(\mathcal{P}_n) \to T_{I_n}(\mathcal{P}_n)$ (remember that $T_{I_n}(\mathcal{P}_n) = Sym_n$). We want to prove that $D(I_n) = \pm I_n$.

For, $D$ preserves the metric $g$ and its Riemann tensor of type $(0,4)$ at $I_n$.
Remembering \ref{tenRiemGLS} (b), we have $tr([D(I_n), D(W)]^2) = tr([I_n, W]^2)=0$ for every $W \in Sym_n$. 
Since the bracket of symmetric matrices is skew-symmetric and since the opposite of the trace of the square of a skew-symmetric matrix is its Frobenius norm, we get that $[D(I_n), D(W)]=0$ for every $W \in Sym_n$, i.e. $[D(I_n), U]=0$ for every $U \in Sym_n$, because $D$ is bijective and therefore $D(I_n) = \lambda I_n$ for some $\lambda \in \mathbb{R}$. Since $D$ is an isometry, we get $\lambda = \pm 1$.

Now note that the space $(I_n)^\perp := \{ W \in Sym_n : g_{_{I_n}}(I_n, W) = 0\} = Sym_n^0 = T_{I_n}(SL\mathcal{P}_n)$ is invariant with respect to $D$, because $D(I_n)= \pm I_n$. Hence $D'$, the restriction of $D$ to $(I_n)^\perp$, is an isometry of $T_{I_n}(SL\mathcal{P}_n)$ with respect to the metric $g$.

Since $D$ preserves the Riemann tensor of $(\mathcal{P}_n, g)$ at $I_n$, $D'$ preserves its restriction to $T_{I_n}(SL\mathcal{P}_n)$, but this last restriction is the Riemann tensor of $(SL\mathcal{P}_n, g)$ at $I_n$, because $(SL\mathcal{P}_n, g)$ is a totally geodesic submanifold of $(\mathcal{P}_n, g)$ (remember \ref{geodSLSym} (b)).

Since $SL\mathcal{P}_n$ is simply connected, complete and symmetric (remember (\ref{studioP_n})),  by   

\cite{KoNo1} VI Cor.\,7.9, there exists a unique isometry $G$ of $(SL\mathcal{P}_n, g)$ such that $G(I_n) = I_n$ and $dG_{I_n}=D'$.

Now we denote $G^{\pm}(A, x) = (G(A), \pm x)$ for every $(A, x) \in SL\mathcal{P}_n \times \mathbb{R}$. $G^{\pm}(A,  x)$ are isometries of $(SL\mathcal{P}_n \times \mathbb{R}, g \times h)$ such that $G^{\pm}(I_n, 0) = (I_n, 0)$ and such that $dG^{\pm}_{(I_n, 0)} = (D' \times (\pm I\!d_{\,\mathbb{R}}))$. 

Easy computations show that 
$dF_{(I_n, 0)}(V, x) = \dfrac{x}{\sqrt{n}} I_n +V$ for every $x \in \mathbb{R}$ and every $V \in T_{I_n} (SL\mathcal{P}_n)=Sym_n^0$ and that
$dF^{-1}_{I_n}(W) = (W-\dfrac{tr(W)}{n} I_n, \dfrac{tr(W)}{\sqrt{n}})$, 

for every $W \in T_{I_n} (\mathcal{P}_n)=Sym_n$,
where $F$ and $F^{-1}$ are the mapping recalled above.

Now $(F \circ G^{\pm} \circ F^{-1})(I_n) = I_n = H(I_n)$ and 

$d(F \circ G^{\pm} \circ F^{-1})_{I_n}(W) = 
dF_{(I_n, 0)} (d G^{\pm}_{(I_n,0)}(W- \dfrac{tr(W)}{n} I_n, \dfrac{tr(W)}{\sqrt{n}})) = \\
=dF_{(I_n, 0)}(dH_{I_n}(W- \dfrac{tr(W)}{n} I_n), \pm \dfrac{tr(W)}{\sqrt{n}}) = \pm \dfrac{tr(W)}{n} I_n +dH_{I_n}(W- \dfrac{tr(W)}{n} I_n)=\\
=\pm \dfrac{tr(W)}{n} I_n + dH_{I_n}(W) \mp \dfrac{tr(W)}{n} I_n = dH_{I_n}(W)$ for every $W \in T_{I_n} (\mathcal{P}_n)=Sym_n$.

Therefore $F \circ G^{\pm} \circ F^{-1} = H$ (see again \cite{O'N1983} Ch.\,3 Prop.\,62). 

Now $L= \Gamma_B \circ H = \Gamma_B \circ F \circ G^{\pm} \circ F^{-1}$ and easy computations allow the get the expressions in the statement. 
\end{proof}

Remembering the definitions of $\varphi$ and $\psi$ in \ref{notazioni}, from the previous \ref{isomSLP} and \ref{isomP_n-prel}, we easily get the following

\begin{thm}\label{isomP_n}
A mapping $L: (\mathcal{P}_n, g) \to (\mathcal{P}_n, g)$ is an isometry if and only if there exists a matrix $M \in GL_n$ such that 
\begin{center}
$L= \Gamma_M$ \ \ or\ \ $L=\Gamma_M \circ \varphi$ \ \ or \ \ $L= \Gamma_M \circ \psi$ \ \ or \ \ $L= \Gamma_M \circ \varphi \circ \psi$.
\end{center}
\end{thm}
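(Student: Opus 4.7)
The plan is to combine Proposition \ref{isomP_n-prel} with Theorem \ref{isomSLP} and then simplify the resulting four expressions using the explicit definitions of $\varphi$ and $\psi$ from Notations \ref{notazioni}.

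For the "if" direction, I would first verify that each of the four listed maps is an isometry: $\Gamma_M$ and $\varphi$ are already isometries of $(\mathcal{P}_n, g)$ by Proposition \ref{isomGS}; to treat $\psi$, I would conjugate it through the isometry $F$ of Proposition \ref{decopositione-p-n}, computing that $(F^{-1}\circ \psi\circ F)(Q,x)=(Q,-x)$, which is manifestly an isometry of $(SL\mathcal{P}_n\times\mathbb{R},g\times h)$. Compositions of isometries are then isometries.

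For the "only if" direction, I would take an isometry $L$ of $(\mathcal{P}_n,g)$ and apply Proposition \ref{isomP_n-prel} to write
\[
L(A)=(\det A)^{\varepsilon/n}\,(\Gamma_B\circ G)\!\left(\frac{A}{(\det A)^{1/n}}\right),\qquad \varepsilon\in\{+1,-1\},
\]
with $B\in GL_n$ and $G$ an isometry of $(SL\mathcal{P}_n,g)$. By Theorem \ref{isomSLP}, $G=\Gamma_X$ or $G=\Gamma_X\circ\varphi$ for some $X\in GL_n$ with $\det X=\pm 1$. Setting $M=BX$, and using the $\mathbb{R}$-linearity of $\Gamma_{BX}$ together with the identity $\Gamma_B\circ\Gamma_X=\Gamma_{BX}$, each of the four combinations $(\varepsilon,G)$ reduces to one of the claimed forms:
\begin{itemize}
\item $(\varepsilon=+1,\,G=\Gamma_X)$ gives $L=\Gamma_M$, since the two factors $(\det A)^{\pm 1/n}$ cancel;
\item $(\varepsilon=-1,\,G=\Gamma_X)$ yields $L(A)=(\det A)^{-2/n}\Gamma_M(A)=\Gamma_M(\psi(A))=\Gamma_M\circ\psi(A)$ (recalling $\psi(A)=(\det A)^{-2/n}A$ for $A\in\mathcal{P}_n$);
\item $(\varepsilon=-1,\,G=\Gamma_X\circ\varphi)$ gives $L=\Gamma_M\circ\varphi$;
\item $(\varepsilon=+1,\,G=\Gamma_X\circ\varphi)$ produces $L(A)=(\det A)^{2/n}\Gamma_M(A^{-1})=\Gamma_M((\varphi\circ\psi)(A))=\Gamma_M\circ\varphi\circ\psi(A)$.
\end{itemize}

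There is no serious obstacle here, since everything reduces to the two previous results plus algebraic rearrangement; the only thing to watch is keeping track of the scalar factors $(\det A)^{\pm 1/n}$ and recognizing the combinations $\psi$ and $\varphi\circ\psi$ among them. The fact that $\varphi$ and $\psi$ commute (noted in \ref{notazioni}) makes the last case unambiguous.
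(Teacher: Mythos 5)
Your proposal is correct and follows exactly the route the paper intends: the paper's proof is simply the remark that the theorem follows from Proposition \ref{isomP_n-prel} and Theorem \ref{isomSLP} together with the definitions of $\varphi$ and $\psi$, and your four case computations (matching $\varepsilon=\pm 1$ and $G=\Gamma_X$ or $\Gamma_X\circ\varphi$ to the four listed forms via $M=BX$) fill in precisely the omitted algebra. The verification that $\psi$ is an isometry by conjugating through $F$ is also consistent with the paper's framework.
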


\begin{rem}\label{rem-dopo-isom-P}
When $n=2$, we have $\psi= \Gamma_W \circ \varphi = \varphi \circ \Gamma_W$ with $W$ as in \ref{rem-dopo-isom-SLP}. Hence in the previous Theorem there are only two mutually exclusive possibilities: $L= \Gamma_M$ and $L= \Gamma_M \circ \varphi$. 

If $n \ge 3$, since $\varphi$, $\psi$, $\varphi \circ \psi$ are not congruences, then the families of isometries, listed in \ref{isomP_n}, are mutually disjoint.
Therefore:

- if $n=2$ or $n$ is odd, then $\mathcal{I}(\mathcal{P}_n, g)$ is a Lie group with four connected components and with $\mathcal{I}^0(\mathcal{P}_n, g)$ isomorphic to $GL_n^+$ if $n$ is odd and $\mathcal{I}^0(\mathcal{P}_2, g)$ isomorphic to $GL_2^+/\{\pm\,I_2\}$;

- if $n\ne 2$ is even, then $\mathcal{I}(\mathcal{P}_n, g)$ is a Lie group with eight connected components and with $\mathcal{I}^0(\mathcal{P}_n, g)$ isomorphic to $GL_n^+/\{\pm I_n\}$.
\end{rem}

\begin{rem}\label{int-geom-isom}
Standard computations allow to obtain the following geometric descriptions of the isometries $\varphi$, $\psi$ and $\varphi \circ \psi$ by means of the results of \S 3 and \S 4:

- $\varphi$ is the symmetry with respect to $I_n$;

- $\psi$ is the orthogonal symmetry with respect to the hypersurface $SL\mathcal{P}_n$;

- $\varphi \circ \psi$ is the orthogonal symmetry with respect to the geodesic $\mathcal{R}=\{ t I_n : t \in \mathbb{R}, t>0\}$ (i.e. the geodesic through $I_n$ and orthogonal to $SL\mathcal{P}_n$).
\end{rem}

\begin{rem}
Let $\mathit{H}_n$ be the real manifold of positive definite hermitian matrices of order $n$. The tensor $g$ defines also on $\mathit{H}_n$ a structure of Riemannian manifold and $(\mathcal{P}_n, g)$ is one of its Riemannian submanifolds. From \cite{Mol2015} Thm.\,3 and from the previous \ref{isomP_n} we conclude that every isometry of $(\mathcal{P}_n, g)$ is restriction of an isometry of $(\mathit{H}_n, g)$.
\end{rem}

\end{document}